\providecommand{\U}[1]{\protect\rule{.1in}{.1in}}
\newtheorem{theorem}{Theorem}[section]
\newtheorem{proposition}[theorem]{Proposition}
\newtheorem{corollary}[theorem]{Corollary}
\newtheorem{example}[theorem]{Example}
\newtheorem{remark}[theorem]{Remark}
\newtheorem{lemma}[theorem]{Lemma}
\newtheorem{final remark}[theorem]{Final Remark}
\newtheorem{definition}[theorem]{Definition}
\begin{document}

\title{Symmetric ideals of generalized summing multilinear operators}
\author{Geraldo Botelho\thanks{Supported by FAPEMIG Grant PPM-00450-17. } ~and  Ariel S. Santiago\thanks{Supported by a FAPEMIG scholarship\newline 2020 Mathematics Subject Classification: 46G25, 47H60, 47L22, 47B10.\newline Keywords: Banach spaces, multilinear operators, sequence classes, symmetric ideals. }}
\date{}
\maketitle

\begin{abstract} Let $X_1, \ldots, X_n,Y$ be classes of Banach spaces-valued sequences. An $n$-linear operator $A$ between Banach spaces belongs to the ideal of $(X_1, \ldots, X_n;Y)$-summing multilinear operators if $(A(x_j^1, \ldots, x_j^n))_{j=1}^\infty$ belongs to $Y$ whenever $(x_j^k)_{j= 1}^\infty$ belongs to $X_k, k = 1, \ldots, n$. In this paper we develop techniques to generate non trivial symmetric ideals of this type. Illustrative examples and additional applications of the techniques are provided.
\end{abstract}

\section{Introduction}
In the theory of ideals of linear and multilinear operators between Banach spaces, ideals defined, or characterized, by the transformation of vector-valued sequences play a central role. Since everything began with the ideal of absolutely summing linear operators, such ideals are called ideals of generalized summing operators. Symmetric ideals of multilinear operators were introduced in \cite{fg} and have been studied since then. An ideal $\cal M$ of multilinear operators is symmetric if $\cal M$ contains the symmetrizations of the operators belonging to $\cal M$. Very recent developments of symmetric ideals can be found in \cite{sergio, botelho+wood, baianos}. The main purpose of this paper is to develop techniques to create non trivial (in a sense that will become clear soon) symmetric ideals of generalized summing multilinear operators.

Let us recall one specific situation in which symmetric ideals are relevant. By $\widehat{A}$ we mean the homogeneous polynomial determined by the multilinear linear operator $A$, and by $\check{P}$ the symmetric multilinear operator associated to the homogeneous polynomial $P$. A given ideal of multilinear operators $\cal M$ gives rise to two polynomial ideals, namely,
$$ {\cal M}^\wedge = \{\widehat{A} : A \in {\cal M}\} \mbox{ and } {\cal M}^\vee = \{P : \check{P} \in {\cal M}\}.$$
It is often necessary that these two ideals coincide and, as is well known, $ {\cal M}^\wedge =  {\cal M}^\vee$ if and only if the ideal $\cal M$ of multilinear operators is symmetric.

Different approaches to ideals of generalized summing operators have appeared in the literature, see, e.g., \cite{nacib, bayart, davidson, dimant, maite, baianosmedit}. In this paper we follow the method from \cite{botelho+campos}, which is based on the concept of sequence classes. Roughly speaking, a sequence is a rule that, to each Banach space $E$, assigns a Banach space $X(E)$ of $E$-valued sequences enjoying certain properties. Precise definitions and examples will be given in Section 2. For the moment we say that $\ell_\infty(\cdot)$ denotes the sequence class $E \mapsto \ell_\infty(E)$. This approach to generalized summing operators has proved to be quite fruitful, see \cite{achour, achour2, botelho+campos+2, davidson, jamiljoed, baweja, baianosmedit, baianosmfat}. Given sequences classes $X_1, \ldots, X_n,Y$, an $n$-linear operator $A \colon E_1 \times \cdots \times E_n \longrightarrow F$ between Banach spaces is $(X_1, \ldots, X_n;Y)$-summing if $(A(x_j^1, \ldots, x_j^n))_{j=1}^\infty \in Y(F)$ whenever $(x_j^k)_{j= 1}^\infty \in X_k(E)$, $k = 1, \ldots, n$. The space of all such operators is denoted by $\Pi_{X_1, \ldots, X_n;Y}(E_1, \ldots, E_n;F)$. Under a certain technical condition (cf. Section 2), $\Pi_{X_1, \ldots, X_n;Y}$ is an ideal of $n$-linear operators (precise definitions will be given in Section 2).

In \cite[Proposition 2.4]{botelho+wood} it is proved that, for all sequence classes $X,Y$ and every $n \geq 2$, the ideal $\Pi_{X, \stackrel{(n)}{\ldots}, X;Y}=: \Pi_{^nX ;Y}$ is symmetric.
%By ${\cal L}^n$ we denote the class of all continuous $n$-linear operators between Banach spaces.
And it is easy to see that, regardless of the sequence classes $X_1, \ldots, X_n$, the ideal $\Pi_{X_1, \ldots, X_n;\ell_\infty(\cdot)}$ coincides with the class of all continuous $n$-linear operators, hence it is trivially symmetric. %In the linear case we just write $\cal L$.
So, we are chasing symmetric ideals of the type $\Pi_{X_1, \ldots, X_n;Y}$ such that $X_i \neq X_j$ for some $i \neq j$ and $Y \neq \ell_\infty(\cdot)$. We shall say that such an ideal is a non trivial symmetric sequential ideal. %Rephrasing, our purpose in this paper is to show how to create non-trivially symmetric ideals and to give concrete examples.?????}

The contribution of this paper is divided into three sections. Philosophically, non symmetric ideals should be the rule, whereas symmetric ideals should be the exceptions. On the one hand, in Section 3 we show that, indeed, there are many examples of non symmetric sequential ideals. On the other hand, the reader will have the opportunity to check that, somehow surprisingly, such ideals are not so easy to be found. Sections 4 and 5 have the same structure: first we prove a result that gives conditions for $\Pi_{X_1, \ldots, X_n;Y}$ to be symmetric, then we construct two procedures to create new sequence classes from a given sequence class, namely $X \mapsto X^u$ in Section 4 and $X \mapsto X^{\rm fd}$ in Section 5, which will enable us to give concrete examples of non trivial symmetric sequential ideals. In Section 6 we give additional applications of the procedures $X \mapsto X^u$ and $X \mapsto X^{\rm fd}$, one application in the theory of operator ideals and another one in the theory of sequence classes.

\section{Preliminaries}

For Banach spaces $E, E_1, \ldots, E_n,F$ over $\mathbb{K} = \mathbb{R}$ or $\mathbb{C}$, $E^*$ denotes the topological dual of $E$, $B_E$ denotes the closed unit ball of $E$ and ${\cal L}(E_1, \ldots, E_n;F)$ denotes the space of continuous $n$-linear operators from $E_1 \times \cdots \times E_n$ to $F$. If $E = E_1 = \cdots = E_n$, we write ${\cal L}(^n E;F)$. For the theory of continuous multilinear operators we refer to \cite{sean, jorge}. In the linear case $n = 1$ we simply write ${\cal L}(E;F)$. A Banach ideal of $n$-linear operators, see \cite{fg, pietsch}, is a subclass ${\cal M}_n$ of the class of continuous $n$-linear operators between Banach spaces such that, for all $E_1, \ldots, E_n,F$,
$${\cal M}_n(E_1, \ldots, E_n;F) := {\cal M}_n \cap  {\cal L}(E_1, \ldots, E_n;F)$$
is a linear subspace of ${\cal L}(E_1, \ldots, E_n;F)$ endowed with a complete norm $\|\cdot\|_{{\cal M}_n}$ such that:\\
$\bullet$ For all $x_1^* \in E_1^*, \ldots, x_n^* \in E_n^*$ and $y \in F$, ${\cal M}_n$ contains the operator
$$(x_1, \ldots, x_n)\in E_1 \times \cdots \times E_n \mapsto x_1^*(x_1) \cdots x_n^*(x_n)y \in F. $$
$\bullet$ $\| (\lambda_1, \ldots, \lambda_n) \in \mathbb{K}^n \mapsto \lambda_1 \cdots \lambda_n \in \mathbb{K}\|_{{\cal M}_n} = 1$.\\
$\bullet$ If $A \in {\cal M}_n(E_1, \ldots, E_n;F)$, $u_j \in {\cal L}(G_j;E_j)$, $j = 1, \ldots, n$ and $t \in {\cal L}(F;H)$, then $t \circ A \circ (u_1, \ldots, u_n) \in {\cal M}_n(G_1, \ldots, G_n;H)$ and
$$\|t \circ A \circ (u_1, \ldots, u_n)\|_{{\cal M}_n} \leq \|t\|\cdot \|A\|_{{\cal M}_n} \cdot \|u_1\| \cdots \|u_n\|. $$

The symmetrization of the $n$-linear operator $A \in {\cal L}(^n E;F)$ is the $n$-linear operator $A_s \in {\cal L}(^n E;F)$ given by
$$A_s(x_1, \ldots, x_m) = \frac{1}{n!}\cdot \sum_{\sigma \in S_n} A(x_{\sigma(1)}, \ldots, x_{\sigma(n)}), $$
where $S_n$ is the set of permutations of $\{1, \ldots, n\}$. An ideal ${\cal M}_n$ of $n$-linear operators is said to be {\it symmetric} \cite{fg} if $A_s$ belongs to ${\cal M}_n$ for every $A$ belonging to ${\cal M}_n$.

Since the beginning of the theory, that is, since \cite{pietsch}, ideals of generalized summing operators, have been playing a central role. Now we describe the approach from \cite{botelho+campos} to these ideals. The symbol $E \stackrel{1}{\hookrightarrow} F$ means that $E$ is a linear subspace of $F$ and $\|\cdot\|_F \leq \|\cdot\|_E$ on $E$. We write $E \stackrel{1}{=}$ if $E = F$ isometrically. By $c_{00}(E)$ and $\ell_\infty(E)$ we denote the spaces of eventually null and bounded $E$-valued sequences. For $j \in \mathbb{N}$, set $e_j := (0, \ldots, 0,1, 0,0, \ldots)$, where $1$ appears at the $j$-th coordinate.

A {\it sequence class} is a rule that assigns, to each Banach space $E$, a Banach space $X(E)$ of $E$-valued sequences such that $c_{00}(E) \subseteq X(E) \stackrel{1}{\hookrightarrow} \ell_\infty(E)$ and $\|e_j\|_{X(\mathbb{K})} = 1$ for every $j \in \mathbb{N}$.

\begin{example}\label{ey8n}\rm Let $1 \leq p < \infty$. The following correspondences are sequence classes: $E \mapsto \ell_\infty(E)$ = bounded $E$-valued sequences, $E \mapsto c_0(E)$ = norm null $E$-valued sequences, $E \mapsto c(E)$ = convergent $E$-valued sequences, $E \mapsto c_0^w(E)$ = weakly null $E$-valued sequences, $E \mapsto \ell_p(E)$ = absolutely $p$-summable $E$-valued sequences, $E \mapsto \ell_p^w(E)$ = weakly $p$-summable $E$-valued sequences, $E \mapsto \ell_p^u(E)$ = unconditionally $p$-summable $E$-valued sequences (see \cite[8.2]{df}), $E \mapsto {\rm Rad}(E)$ = almost unconditionally summable $E$-valued sequences (see \cite[Chapter 12]{diestel+jarchow+tonge}), $E \mapsto {\rm RAD}(E)$ = almost unconditionally bounded $E$-valued sequences (see \cite{vakhania}), $E \mapsto \ell_p\langle E \rangle$ = Cohen strongly $p$-summable $E$-valued sequences (see \cite{cohen}), $E \mapsto \ell_p^{\rm mid}(E)$ = mid $p$-summable $E$-valued sequences (see \cite{botelho+campos+santos}). All sequences spaces aforementioned are endowed with their natural norms. For details, see, e.g., \cite{botelho+campos+2}.
\end{example}

When referring to a sequence class $X$, we shall adopt the following {\it modus vivendi}: at any risk of ambiguity we shall write $X(\cdot)$, otherwise we simply write $X$. For example, we write $\ell_p^w$ instead of $\ell_p^w(\cdot)$, but we write $\ell_p(\cdot)$ and not $\ell_p$.

Let $X_1, \ldots, X_n,Y$ be sequence classes. An $n$-linear operator $A \in {\cal L}(E_1, \ldots, E_n;F)$ is said to be $(X_1, \ldots, X_n;Y)$-summing, in symbols $A \in \Pi_{X_1, \ldots, X_n;Y}(E_1, \ldots, E_n;F)$, if $(A(x_j^1, \ldots, x_j^n))_{j=1}^\infty \in Y(F)$ whenever $(x_j^k)_{j= 1}^\infty \in X_k(E_k), k = 1, \ldots, n$. In this case, the induce $n$-linear operator
$$\widehat{A} \colon X(E_1) \times \cdots \times X(E_n) \longrightarrow Y(F)~,~\widehat{A}(((x_j^1)_{j= 1}^\infty, \ldots, (x_j^n)_{j= 1}^\infty) =  (A(x_j^1, \ldots, x_j^n))_{j=1}^\infty,$$
is continuous and the expression $\|A\|_{\Pi_{X_1, \ldots, X_n;Y}} := \|\widehat{A}\|$ makes $\Pi_{X_1, \ldots, X_n;Y}(E_1, \ldots, E_n;F)$ a Banach space. A sequence class $X$ is {\it linearly stable} if, regardless of the $n \in \mathbb{N}$ and the Banach space $E,F$, $\Pi_{X;X} \stackrel{1}{=} {\cal L}(E;F)$. All sequence classes listed in Example \ref{ey8n} are linearly stable.

Under the two conditions below, $\Pi_{X_1, \ldots, X_n;Y}$ is a Banach ideal of $n$-linear operators (see \cite[Theorem 3.6]{botelho+campos}:\\
$\bullet$ $X_1, \ldots, X_n$ and $Y$ are linearly stable.\\
$\bullet$ $(\lambda_j^1\cdots \lambda_j^n)_{j=1}^\infty\in Y(\mathbb{K})$ and $\|(\lambda_j^1\cdots \lambda_j^n)_{j=1}^\infty\|_{Y(\mathbb{K})} \leq \prod\limits_{k=1}^n \|(x_j^k)_{k=1}^\infty\|_{X_k(\mathbb{K})}$ for all $(\lambda_j^k)_{j= 1}^\infty \in X_k(\mathbb{K}), k = 1, \ldots, n$.

Of course we are interested in the case that $\Pi_{X_1, \ldots, X_n;Y}$ is a Banach ideal of $n$-linear operators, so we will always suppose that the two conditions above hold. In particular, henceforth all sequence classes are supposed to be linearly stable.

\section{Non symmetric ideals}
In this section we show that, although they are not so easy to be constructed, there are plenty of non symmetric ideals of the type $\Pi_{X_1, \ldots, X_n;Y}$. It is not difficult to see that non symmetric ideals of bilinear operators can be extended to the general multilinear case, so we restrict ourselves to the bilinear case. By $A^t$ we denote the transpose of a bilinear operator $A$, that is, $A^t(x,y) = A(y,x)$. So, an ideal $\cal M$ of bilinear operators is symmetric if and only if $A^t \in \cal M$ whenever $A \in {\cal M}$.

A sequence class $X$ is said to be:\\
$\bullet$ {\it Subsequence invariant} if $(x_j)_{j=1}^\infty \in X(E)$ implies that every subsequence $(x_{j_k})_{k=1}^\infty \in X(E)$ and $\|(x_{j_k})_{k=1}^\infty\|_{X(E)}\leq \|(x_{j})_{j=1}^\infty\|_{X(E)}.$

%All sequence classes listed in Example \ref{ey8n} are subsequence invariant.

\noindent$\bullet$ {\it $S$-complete}, where $S \subseteq \mathbb{K}^\mathbb{N}$,  if $(\alpha_j x_j)_{j=1}^\infty \in X(E)$ whenever $( x_j)_{j=1}^\infty \in X(E)$ and $(\alpha_j)_{j=1}^\infty \in S$.

The following sequence classes are subsequence invariant: $c_0(\cdot), c(\cdot),c_0^w$ and, for $1 \leq p \leq \infty$, $\ell_p \langle \cdot \rangle, \ell_p(\cdot), \ell_p^{\rm mid}, \ell_p^u, \ell_p^w$.
All sequence classes listed in Example \ref{ey8n}, but RAD, Rad and $c(\cdot)$, are $\ell_\infty$-complete. Due to Kahane's contraction principle, RAD and Rad are $\ell_\infty$-complete in the real case $\mathbb{K} = \mathbb{R}$. It is clear that $c(\cdot)$ is $c_0$-complete, hence $\ell_p$-complete for any $1 \leq p < \infty$.

\begin{proposition}\label{cap3P06} Let $X, Y, W$ be sequence classes such that $X$ is subsequence invariant and $W(\mathbb{K})$-complete, $W$ is $\ell_\infty$-complete and the ideal of bilinear operators $\Pi_{W,X;Y}$ is symmetric. Then $\Pi_{X;Y}(E;F) \subseteq \Pi_{W;Y}(E,F)$ for every Banach space $F$ and every Banach space $E$ such that $X(E) \not\subseteq c_0^w(E)$.
\end{proposition}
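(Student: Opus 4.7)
The plan is to fabricate a bilinear operator that encodes $T$, exploit symmetry of $\Pi_{W,X;Y}$ to swap its coordinates, and then "divide" by a non-vanishing functional to recover $T$ acting on an arbitrary $W$-sequence.

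First I would fix $T \in \Pi_{X;Y}(E;F)$ and use the hypothesis $X(E) \not\subseteq c_0^w(E)$: pick $(x_j)_j \in X(E)$ that is not weakly null, so some $\varphi \in E^*$ and $\varepsilon>0$ satisfy $|\varphi(x_{j_k})| \geq \varepsilon$ along a subsequence. Subsequence invariance of $X$ lets me replace $(x_j)_j$ by $(x_{j_k})_k$ and so assume $|\varphi(x_j)| \geq \varepsilon$ for every $j$.

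Next I would introduce the bilinear operator
$$A \colon E \times E \longrightarrow F,\qquad A(x,y) = \varphi(x)\,T(y),$$
and verify that $A \in \Pi_{W,X;Y}(E,E;F)$. Given $(a_j)_j \in W(E)$ and $(b_j)_j \in X(E)$, linear stability of $W$ gives $(\varphi(a_j))_j \in W(\mathbb{K})$; then $W(\mathbb{K})$-completeness of $X$ yields $(\varphi(a_j)b_j)_j \in X(E)$; finally $T \in \Pi_{X;Y}$ delivers $(A(a_j,b_j))_j = (T(\varphi(a_j)b_j))_j \in Y(F)$. Symmetry of $\Pi_{W,X;Y}$ then transfers this to $A^t$, where $A^t(x,y) = \varphi(y)T(x)$.

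To finish, take any $(w_j)_j \in W(E)$ and set $\alpha_j := 1/\varphi(x_j)$, which satisfies $|\alpha_j| \leq 1/\varepsilon$, so $(\alpha_j)_j \in \ell_\infty(\mathbb{K})$. Since $W$ is $\ell_\infty$-complete, $(\alpha_j w_j)_j \in W(E)$. Applying $A^t \in \Pi_{W,X;Y}$ to the pair $(\alpha_j w_j)_j \in W(E)$ and $(x_j)_j \in X(E)$ gives
$$\bigl(A^t(\alpha_j w_j, x_j)\bigr)_j = \bigl(\varphi(x_j)\,\alpha_j\,T(w_j)\bigr)_j = (T(w_j))_j \in Y(F),$$
which is exactly the statement $T \in \Pi_{W;Y}(E;F)$.

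The only non-routine step is spotting the operator $A(x,y)=\varphi(x)T(y)$: once one sees that the two completeness hypotheses and symmetry line up to place $A$, then $A^t$, inside $\Pi_{W,X;Y}$, the "division by $\varphi(x_j)$" trick (legitimised by $\ell_\infty$-completeness of $W$) is the natural way to undo the factor $\varphi(x_j)$ and extract $T(w_j)$.
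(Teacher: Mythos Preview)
Your proof is correct and follows essentially the same argument as the paper: define the bilinear form $A(x,y)=\varphi(x)T(y)$, use $W(\mathbb{K})$-completeness of $X$ and linear stability of $W$ to place $A$ in $\Pi_{W,X;Y}$, pass to $A^t$ by symmetry, and then divide by the bounded-below functional values (via $\ell_\infty$-completeness of $W$) to extract $(T(w_j))_j \in Y(F)$. The only differences are notational.
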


\begin{proof} Let $E,F$ be Banach spaces with $X(E) \not\subseteq c_0^w(E)$. We can choose a sequence $(y_j)_{j=1}^\infty \in X(E)$ and a functional $x^* \in E^*$ such that $x^*(y_j) \not\longrightarrow 0$. Since $X$ is subsequence invariant, we can suppose that $|x^*(y_j)| \geq \varepsilon$ for every $j \in \mathbb{N}$ and some $\varepsilon > 0$. Given $u \in \Pi_{X;Y}(E;F)$, let us check that the continuous bilinear operator
$$A \colon E \times E \longrightarrow F~,~A(x,y) = x^*(x)u(y), $$
is $(W,X;Y)$-summing. For $(x_j)_{j=1}^\infty \in X(E)$ and $(w_j)_{j=1}^\infty \in W(E)$, $(x^*(w_j))_{j=1}^\infty \in W(\mathbb{K})$ by the linear stability of $W$, hence $(x^*(w_j)x_j)_{j=1}^\infty \in X(E)$ by the $W(\mathbb{K})$-completeness of $X$. The $(X;Y)$-summability of $u$ gives
$$(A(w_j,x_j))_{j=1}^\infty = (x^*(w_j)u(x_j))_{j=1}^\infty = (u(x^*(w_j)x_j))_{j=1}^\infty\in  Y(F),$$ proving that $A \in \Pi_{W,X;Y}(E,E;F)$. Therefore $A^t \in \Pi_{W,X;Y}(E,E;F)$ by the symmetry of $\Pi_{W,X;Y}$. To prove the $(W;Y)$-summability of $u$, let  $(w_j)_{j=1}^\infty \in W(E)$ be given. Since $ \left( \frac{1}{x^*(y_{j})} \right)_{j=1}^\infty\in \ell_\infty$ (recall that $\left|\frac{1}{x^*(y_{j})}\right|\leq \frac{1}{\varepsilon} $ for every $ j $), calling on the $\ell_\infty$-completeness of $W$ once again we get that $\left( \frac{1}{x^*(y_{j})} w_j\right)_{j=1}^\infty \in W(E)$. Using that $(y_j)_{j=1}^\infty \in X(E)$, we finish the proof:
\begin{align*} (u(w_j))_{j=1}^\infty&=\left(\frac{1}{x^*(y_{j})}x^*(y_{j}) u(w_j)\right)_{j=1}^\infty=\left(\frac{1}{x^*(y_{j})} A(y_{j}, w_j)\right)_{j=1}^\infty\\
	&=\left(\frac{1}{x^*(y_{j})} A^t(w_j, y_{j})\right)_{j=1}^\infty = \left(A^t\left(\frac{1}{x^*(y_{j})} w_j, y_{j}\right)\right)_{j=1}^\infty \in Y(F).
\end{align*}
\end{proof}

\begin{example}\label{Ex1} \rm Let $1  \leq p < \infty$. Taking $W = \ell_p^w$ and $X = Y = c(\cdot)$, we have that $c(\cdot)$ is subsequence invariant and $\ell_p$-complete, $\ell_p^w$ is $\ell_\infty$-complete and $c(E) \not\subseteq c_0^w(E)$ for every $E$. Using that $\ell_p^w \subseteq c_0^w$, it is easy to check that $\Pi_{\ell_p^w;c(\cdot)}=\Pi_{\ell_p^w;c_0(\cdot)}$ (in the literature, the operators belonging to this ideal are called $p$-convergent, see \cite{ardakani, castillo}). It is clear that this operator ideal is non-trivial, for instance, the identity operator on $c_0$ does not belong to it. We have
$$\Pi_{c(\cdot);c(\cdot)} = {\cal L} \not\subseteq \Pi_{\ell_p^w;c_0(\cdot)} =  \Pi_{\ell_p^w;c(\cdot)}, $$
where the first equality is the linear stability of $c(\cdot)$. Proposition \ref{cap3P06} gives that the ideal $\Pi_{\ell_p^w,c(\cdot);c(\cdot)}$  of $(\ell_p^w,c(\cdot);c(\cdot))$-summing bilinear operators is not symmetric.
\end{example}

\begin{proposition} Let $X$ be a subsequence invariant sequence class not contained in $c_0^w$. Then, for any $1 \leq p < \infty$, the ideal $\Pi_{\ell_p^w, X; \ell_p(\cdot)}$ is not symmetric.
\end{proposition}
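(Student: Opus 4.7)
The plan is to argue by contradiction, replaying the mechanism of Proposition \ref{cap3P06} in the specific setup where the concrete shape of the target $\ell_p(\cdot)$ lets us avoid any completeness assumption on $X$.

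The preliminary step is to strengthen the hypothesis $X \not\subseteq c_0^w$ into the existence of an \emph{infinite-dimensional} Banach space $E$, a sequence $(y_j)_{j=1}^\infty \in X(E)$, a functional $x^* \in E^*$ and a constant $\varepsilon > 0$ with $|x^*(y_j)| \geq \varepsilon$ for every $j$. Starting from any $E_0$ with $X(E_0) \not\subseteq c_0^w(E_0)$ and a non-weakly-null sequence in $X(E_0)$, I would isometrically embed $E_0$ into, say, $E := E_0 \oplus \ell_2$, transport the sequence into $X(E)$ using the linear stability of $X$ applied to the inclusion, extend a separating functional by Hahn--Banach, and apply subsequence invariance to obtain the uniform lower bound on a subsequence.

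Assuming $\Pi_{\ell_p^w, X; \ell_p(\cdot)}$ is symmetric, for an arbitrary Banach space $F$ and an arbitrary $u \in {\cal L}(E;F)$ I would form the continuous bilinear operator $A \colon E \times E \to F$ defined by $A(x,y) = x^*(x)\, u(y)$, and verify directly that $A \in \Pi_{\ell_p^w, X; \ell_p(\cdot)}$: for $(w_j) \in \ell_p^w(E)$ and $(z_j) \in X(E)$, linear stability of $\ell_p^w$ places $(x^*(w_j))_j$ in $\ell_p$, while $X \stackrel{1}{\hookrightarrow} \ell_\infty$ together with the boundedness of $u$ keeps $(u(z_j))_j$ bounded in $F$, so $(x^*(w_j)u(z_j))_j \in \ell_p(F)$. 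This is the step where the concrete shape of $\ell_p(\cdot)$ lets us dispense with any $\ell_p$-completeness hypothesis on $X$ that a direct invocation of Proposition \ref{cap3P06} would require.

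By the assumed symmetry, $A^t \in \Pi_{\ell_p^w, X; \ell_p(\cdot)}$; feeding $(w_j) \in \ell_p^w(E)$ in the first slot and $(y_j)$ in the second yields $(x^*(y_j)u(w_j))_j \in \ell_p(F)$, and the lower bound $|x^*(y_j)| \geq \varepsilon$ then forces $(u(w_j))_j \in \ell_p(F)$. Thus every $u \in {\cal L}(E;F)$ would be absolutely $p$-summing, which is absurd: taking $F = E$ and $u = \mathrm{id}_E$ contradicts the classical fact that the identity on an infinite-dimensional Banach space is not absolutely $p$-summing. The only delicate point I foresee is the infinite-dimensional upgrade of $E$ in the preliminary step, since without it the final contradiction would disappear.
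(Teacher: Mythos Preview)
Your argument is correct and follows essentially the same route as the paper: build the bilinear form $A(x,y)=x^*(x)\,u(y)$ from a non-weakly-null $X$-sequence with $|x^*(y_j)|\ge\varepsilon$, verify $A\in\Pi_{\ell_p^w,X;\ell_p(\cdot)}$ using only that $(x^*(w_j))_j\in\ell_p$ and $(u(z_j))_j$ is bounded, and then exploit $A^t$ to force absolute $p$-summability of $u$ on the infinite-dimensional space. The only cosmetic differences are that the paper works directly with $u=\mathrm{id}_E$ (so $A(x,y)=x^*(x)y$) and exhibits the failure of $A^t$ explicitly rather than arguing by contradiction, and it upgrades to infinite dimensions via $\ell_1(E)$ rather than $E_0\oplus\ell_2$.
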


\begin{proof} By assumption there are a Banach space $E$ and a non weakly null sequence $(z_j)_{j=1}^\infty \in X(E)$, say $z^*(z_j) \not\longrightarrow 0$, $z^* \in E^*$. Let us see that we can suppose that $E$ is infinite dimensional. If not, define $w_j = (z_j,0,0,0,\ldots) \in \ell_1(E)$ for every $j$. Considering the inclusion $x \in E \mapsto (x,0,0, \ldots) \in \ell_1(E)$, the linear stability of $X$ gives that $(w_j)_{j=1}^\infty \in X(\ell_1(E))$. By \cite[II.B.21]{woy}, $w^* := (z^*,0,0, \ldots) \in \ell_\infty(E^*) = \left[\ell_1(E)\right]^*$ and $w^*(w_j) = z^*(z_j) \not\longrightarrow 0$. So, $(w_j)_{j=1}^\infty $ is a non weakly null sequence in the infinite dimensional space $\ell_1(E)$ belonging to $X$. Therefore we can assume that $(z_j)_{j=1^\infty} \in X(E)$ is a non weakly null sequence in an infinite dimensional Banach space $E$.

Since $X$ is subsequence invariant, up to a subsequence we can take $\varepsilon > 0$ and $x^* \in E^*$ such that $|x^*(z_j)| \geq \varepsilon$ for every $j$. Using that $X(E)\stackrel{1}{\hookrightarrow}\ell_\infty(E)$, it follows easily that the bilinear operator
$$A \colon E \times E \longrightarrow E~,~A(x,y) = x^*(x)y, $$
is $(\ell_p^w, X; \ell_p(\cdot))$-summing, that is, $A\in \Pi_{\ell_p^w,X;\ell_p(\cdot)}(E,E;E)$. Since $E$ is infinite dimensional, from \cite[Theorem 2.18]{diestel+jarchow+tonge} there is $(x_j)_{j=1}^{\infty}\in \ell_p^w(E)\setminus \ell_p(E)$. So,
	\begin{align*}
	\sum_{j=1}^{\infty} \| A^t(x_j,z_j)\|^p&=\sum_{j=1}^{\infty} \| A(z_j,x_j)\|^p=\sum_{j=1}^{\infty} |x^*(z_j)|^p \cdot\|x_j\|^p\geq \varepsilon^p \cdot \sum_{j=1}^{\infty} \|x_j\|^p=+\infty,
	\end{align*}
proving that $A^t\notin \Pi_{\ell_p^w,X;\ell_p(\cdot)}(E,E;E)$.
\end{proof}

\begin{example}\rm For $1 \leq p < \infty$, the ideals $\Pi_{\ell_p^w, c(\cdot); \ell_p(\cdot)}$ and $\Pi_{\ell_p^w, \ell_\infty(\cdot); \ell_p(\cdot)}$ are not symmetric.
\end{example}

We finish this section with a concrete example which is not covered by the results above.

\begin{example}\rm Let $1 \leq p < q < \infty$. As we saw in Example \ref{Ex1}, we can take an infinite dimensional Banach space $E$ and a non-norm null weakly $p$-summable sequence $(x_j)_{j=1}^\infty \subseteq E$, that is, $(x_j)_{j=1}^\infty \in \ell_p^w(E) \setminus c_0(E)$. Up to a subsequence, we can suppose that $\|x_j\| \geq \varepsilon$ for every $j \in \mathbb{N}$ and some $\varepsilon > 0$. Pick a nonzero functional $x^* \in E^* $ and consider %  and pick $z \in E, \|z\| = 1$. Taking a functional $x^* \in E^*$ such that $x^*(z) = 1$, consider
%Using the Hahn-Banach theorem it is easy to see that $\ell_p^w(E) \subsetneqq \ell_q^w(E)$, so we can take $(z_j)_{j=1}^\infty \in \ell_q^w(E)\setminus \ell_p^w(E)$. Letting $x^* \in E^*$ be such that $(x^*(z_j))_{j=1}^\infty \in \ell_q\setminus \ell_p$, consider
the bilinear operator
	$$ A\colon E\times E \longrightarrow E ~,~A(x,y)=x^*(x)y.$$
On the one hand, it is easy to see that $A$ is $(\ell_p^w, \ell_q^w;\ell_p(\cdot))$-summing. On the other hand, let $(\alpha_j)_{j=1}^\infty$ be a $q$-summable non-$p$-summable scalar sequence, that is $(\alpha_j)_{j=1}^\infty \in \ell_q \setminus \ell_p$, and choose $z \in E$ such that $x^*(z) \neq 0$. Defining $z_j = \alpha_j z$ for every $j$, it is clear that $(z_j)_{j=1}^\infty \in \ell_q^w(E)$ and $(x^*(z_j))_{j=1}^\infty \notin\ell_p$. In this fashion, $(x_j)_{j=1}^\infty \in \ell_p^w(E), (z_j)_{j=1}^\infty \in \ell_q^w(E)$ and
\begin{align*}
	\sum_{j=1}^{\infty} \| A^t(x_{j},z_j)\|^p&=\sum_{j=1}^{\infty} \| A(z_j,x_{j})\|^p =\sum_{j=1}^{\infty} |x^*(z_j)|^p \cdot \|x_{j}\|^p \geq \varepsilon^p \cdot \sum_{j=1}^{\infty} |x^*(z_j)|^p=+\infty,
	\end{align*}
which proves that $A^t$ fails to be $(\ell_p^w, \ell_q^w;\ell_p(\cdot))$-summing. The conclusion is that the ideal $\Pi_{\ell_p^w,\ell_q^w;\ell_p(\cdot)}$ fails to be symmetric.
\end{example}

\section{The procedure $X \mapsto X^u$}
The purpose of this section and of the next one is not only to provide examples of non trivial symmetric ideals of the type $\Pi_{X_1, \ldots, X_n;Y}$, but also to show how to create such ideals.

In this section, first we give a general result, then concrete applications are given. Soon we shall see %In this section we prove two results showing how to create symmetric ideals of the type . In the next two sections we provide plenty of concrete applications of these results. In Section 5 we shall give
examples of sequence classes satisfying the following conditions:

\begin{definition}\rm (a) For the sequence classes $X$ and $Y$, we write $X\stackrel{\rm fin}{\leq} Y$ if, for every Banach space $E$, $\|\cdot\|_{X(E)} \leq \|\cdot\|_{Y(E)}$ on $c_{00}(E)$. If the norms coincide, we write  $X\stackrel{\rm fin}{=} Y$.
\\
(b) The sequence classes $X_1,\ldots,X_n$ are said to be {\it jointly dominated} if there exists a finitely determined sequence class $X$ such that $X_i\stackrel{1}{\hookrightarrow} X$ and $X_i\stackrel{\rm fin}{\leq} X$ for $i=1,\ldots,n$. %Neste caso, dizemos que as classes $X_1,\ldots,X_n$ são conjuntamente dominadas por $X$.
\end{definition}

\begin{theorem}\label{cap3T1.1}
	If the sequence classes $X_1,\ldots,X_n$ are jointly dominated, then the ideal $\Pi_{X_1,\ldots,X_n;Y}$ is symmetric for every finitely determined sequence class $Y$.
\end{theorem}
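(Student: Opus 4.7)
The plan is to average permutations: since $A_s = \tfrac{1}{n!}\sum_{\sigma \in S_n} A_\sigma$, where $A_\sigma(x_1,\ldots,x_n) := A(x_{\sigma(1)},\ldots,x_{\sigma(n)})$, and the space $\Pi_{X_1,\ldots,X_n;Y}(E,\ldots,E;F)$ is linear, it suffices to show that each $A_\sigma$ is $(X_1,\ldots,X_n;Y)$-summing whenever $A$ is. So I would fix $\sigma \in S_n$ and sequences $(x_j^k)_{j=1}^\infty \in X_k(E)$ for $k = 1, \ldots, n$, with the goal of checking that $\bigl(A(x_j^{\sigma(1)},\ldots,x_j^{\sigma(n)})\bigr)_{j=1}^\infty \in Y(F)$. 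Because $Y$ is finitely determined, the task reduces to bounding the $Y$-norms of the truncations $\bigl(A(x_j^{\sigma(1)},\ldots,x_j^{\sigma(n)})\bigr)_{j=1}^N$, padded with zeros, uniformly in $N$.

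Next, I would set $u_N^k := (x_1^k, \ldots, x_N^k, 0, 0, \ldots) \in c_{00}(E) \subseteq X_k(E)$ and observe that joint domination forces $\|u_N^k\|_{X_i(E)} = \|u_N^k\|_{X(E)}$ for every $i$: the embedding $X_i \stackrel{1}{\hookrightarrow} X$ gives one inequality, and $X_i \stackrel{\rm fin}{\leq} X$ gives the other on $c_{00}$. Feeding $u_N^{\sigma(k)}$ into the $k$-th slot of $A$ (legal because $u_N^{\sigma(k)} \in c_{00}(E) \subseteq X_k(E)$), the $(X_1,\ldots,X_n;Y)$-summability of $A$ yields
\[
\bigl\|\bigl(A(x_j^{\sigma(1)},\ldots,x_j^{\sigma(n)})\bigr)_{j=1}^N\bigr\|_{Y(F)} \leq \|A\|_{\Pi_{X_1,\ldots,X_n;Y}} \prod_{k=1}^n \|u_N^{\sigma(k)}\|_{X_k(E)} = \|A\|_{\Pi_{X_1,\ldots,X_n;Y}} \prod_{k=1}^n \|u_N^{\sigma(k)}\|_{X(E)}.
\]
To remove the dependence on $N$, I would invoke finite determinedness of $X$, which yields the truncation estimate $\|u_N^{\sigma(k)}\|_{X(E)} \leq \|(x_j^{\sigma(k)})_{j=1}^\infty\|_{X(E)}$, and then $X_{\sigma(k)} \stackrel{1}{\hookrightarrow} X$ to pass to $\|(x_j^{\sigma(k)})_{j=1}^\infty\|_{X_{\sigma(k)}(E)}$. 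With the right-hand side now independent of $N$, finite determinedness of $Y$ delivers membership in $Y(F)$.

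The main obstacle, and the reason joint domination is exactly the right hypothesis, is the mismatch between slot indices: the $k$-th slot of $A$ expects an $X_k$-sequence, yet after permuting we are forced to feed it something whose natural home is $X_{\sigma(k)}$. Joint domination flattens this discrepancy on finitely supported sequences by collapsing all of the $X_i$-norms to a common $X$-norm; the two uses of finite determinedness, one for the dominating class $X$ and one for the target class $Y$, then lift the estimate from truncations back to the full sequences without loss, so neither hypothesis can be dropped.
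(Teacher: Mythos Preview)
Your argument is correct. The technical core---coincidence of all $X_i$-norms with the $X$-norm on $c_{00}(E)$, the truncation estimate via finite determinedness of $X$, and the lift back via finite determinedness of $Y$---is exactly the computation the paper performs.

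The packaging differs slightly. The paper does not argue permutation by permutation; instead it proves the stronger intermediate statement $\Pi_{X_1,\ldots,X_n;Y}\stackrel{1}{=}\Pi_{^nX;Y}$ (your chain of inequalities is precisely the nontrivial inclusion $\Pi_{X_1,\ldots,X_n;Y}\stackrel{1}{\hookrightarrow}\Pi_{^nX;Y}$, while the reverse inclusion is immediate from $X_i\stackrel{1}{\hookrightarrow}X$), and then invokes the known fact that $\Pi_{^nX;Y}$ is symmetric. What the paper's route buys is this isometric identification of the ideal with one built from a single sequence class, which is of independent interest; what your route buys is self-containment, since you never need to cite that $\Pi_{^nX;Y}$ is symmetric. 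Either way the same estimate does all the work.
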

\begin{proof} Let $Y$ be a finitely determined sequence class and
	let $X$ be a finitely determined sequence class such that  $X_i\stackrel{1}{\hookrightarrow} X$ and $X_i\stackrel{\rm fin}{\leq} X$ for $i=1,\ldots,n$. By \cite[Proposition 2.4]{botelho+wood} it is enough to show that  $\Pi_{X_1,\ldots,X_n;Y}\stackrel{1}{=}\Pi_{^nX;Y}$. On the one hand, the condition $X_i\stackrel{1}{\hookrightarrow} X, i = 1, \ldots, n$, gives immediately that   $\mathcal{L}_{^nX;Y} \stackrel{1}{\hookrightarrow} \mathcal{L}_{X_1,\ldots,X_n;Y}$. %Dados  $A\in\mathcal{L}_{^nX;Y}(E_1,\ldots , E_n;F) $ e $(x_j^i)_{j=1}^\infty \in X_i(E_i)$, $ i=1,\ldots,n$,
	%\begin{align*}
%	\|(A(x_j^1,\ldots ,x_j^n))_{j=1}^{\infty}\|_{Y(F)}&\leq \|A\|_{^nX;Y}\prod_{i=1}^n \|(x_j^i)_{j=1}^\infty\|_{X(E_i)}\\
%	&\leq \|A\|_{^nX;Y}\prod_{i=1}^n \|(x_j^i)_{j=1}^\infty\|_{Xi(E_i)}.\\
%	\end{align*}
	On the other hand, given $A\in\mathcal{L}_{X_1,\ldots,X_n;Y}(E_1,\ldots , E_n;F) $,  $k\in\mathbb{N}$ and $x_j^i\in E_i$, $j=1,\ldots, k$, $i=1,\ldots, n$, the condition $X_i\stackrel{\rm fin}{\leq} X$ for $i=1,\ldots,n$ gives
	\begin{align*}
	\|(A(x_j^1,\ldots ,x_j^n))_{j=1}^{k}\|_{Y(F)}&\leq \|A\|_{X_1,\ldots,X_n;Y}\!\cdot\! \prod_{i=1}^n \|(x_j^i)_{j=1}^k\|_{X_i(E_i)} \leq \|A\|_{X_1,\ldots,X_n;Y}\!\cdot\! \prod_{i=1}^n \|(x_j^i)_{j=1}^k\|_{X(E_i)}.
	\end{align*}
Since both $X$ and $Y$ are finitely determined, taking the suprema over $k$ it follows that $A\in \mathcal{L}_{^nX;Y}(E_1,\ldots , E_n;F) $ and $\|A\|_{^nX;Y}\leq \|A\|_{X_1,\ldots,X_n;Y}$. Therefore, $\mathcal{L}_{X_1,\ldots,X_n;Y} \stackrel{1}{\hookrightarrow} \mathcal{L}_{^nX;Y}$.
\end{proof}

Now we proceed to give applications of the theorem above.

\begin{definition}\rm Let $X$ be a sequence class. \\
(a) For a Banach space $E$, we define $\overline{c_{00}}^X(E)$ as the closure of ${c_{00}(E)}$ in $X(E)$ endowed with the norm $\|\cdot\|_{X(E)}$.\\
(b) $X$ is said to be {\it finitely shrinking} if, regardless of the Banach space $E$, the sequence $(x_j)_{j=1}^\infty\in X(E)$ and $k\in \mathbb{N}$, it holds		$$(x_{j})_{j\neq k}: = (x_1, \ldots, x_{k-1}, x_{k+1}, \ldots )\in X(E) ~~\text{and}~~ \|(x_{j})_{j\neq k}\|_{X(E)}\leq \|(x_{j})_{j=1}^\infty\|_{X(E)}.$$
(c) Suppose that $X$ is finitely shrinking. If $(x_j)_{j=1}^\infty\in X(E)$, then $(x_n, x_{n+1}, \ldots) \in X(E)$ for every $n \in \mathbb{N}$. For a Banach space $E$, we define %the unconditional kernel of $X$ by
$$ X^u(E):=\left\{(x_j)_{j=1}^\infty\in X(E): \lim_n\|(x_n ,x_{n+1},\ldots)\|_{X(E)} = 0 \right\},$$
endowed with the norm $\|\cdot\|_{X(E)}$.
\end{definition}

\begin{example}\rm It is not difficult to check that $\overline{c_{00}}^{\ell_p^w}=(\ell_p^w)^u=\ell_p^u$ and $\overline{c_{00}}^{\ell_p(\cdot)}=(\ell_p(\cdot))^u=\ell_p(\cdot)$ for every $1 \leq p < \infty$, and that $\overline{c_{00}}^{\ell_\infty(\cdot)}=(\ell_\infty(\cdot))^u=c_0(\cdot)$. Using \cite[Theorem 3.10]{fourie+rontgen} we get
	$\overline{c_{00}}^{\ell_p\langle \cdot \rangle}=  (\ell_p\langle\cdot\rangle)^u=\ell_p\langle\cdot\rangle$. Additional examples will be given soon.
\end{example}

It is easy to check that $\overline{c_{00}}^X$ is a (linearly stable) sequence class. It is clear that subsequence invariant classes are finitely shrinking, so we already have plenty of examples of finitely shrinking classes. Soon we will show that RAD is finitely shrinking.

The procedure $X \mapsto X^u$ was first studied in the thesis \cite{teseLucas}, yet to be published. There it is proved, among other things and under different assumptions on  $X$, that $X^u$ is a (linearly stable) sequence class. Before proceeding, we have to go into details of the class $X^u$.

A sequence class $Y$ is a {\it closed subclass} of the sequence class $X$ if, for every Banach space $E$, $Y(E)$ is a closed subspace of $X(E)$ and $\|\cdot\|_{Y(E)} = \|\cdot\|_{X(E)}$ on $Y(E)$. Of course, $\overline{c_{00}}^X$ is a closed subclass of $X$.

The reader may be tempted to think that $\overline{c_{00}}^X = X^u$ for every $X$. But there is a subtle detail. In the next proposition we shall see that the equality holds if, in addition to be finitely shrinking, $X$ satisfies the following property: for all $E$, $(x_j)_{j=1}^\infty \subseteq E$ and $k \in \mathbb{N}$ for which $x_k = 0$, it holds
$$(x_j)_{j=1}^\infty \in X(E) \Leftrightarrow  (x_j)_{j\neq k} \in X(E) \mbox{~and~}\|(x_j)_{j=1}^\infty\|_{X(E)}=\|(x_j)_{j\neq k}\|_{X(E)}.$$
In this case  $X$ is said to be {\it finitely zero invariant}.
All finitely shrinking classes we have worked with are finitely zero invariant, but we do not know if this holds in general.

Next we summarize the properties of $X^u$ we shall use later. We skip the proofs.

\begin{proposition}\label{propxu} Let $X$ and $Y$ be a finitely shrinking sequence classes.\\
{\rm (a)} $X^u$ is a (linearly stable) finitely shrinking sequence class, $(X^u)^u = X^u$ and $X^u \stackrel{1}{\hookrightarrow} c_0(\cdot)$. \\
{\rm (b)} If $X$ is finitely zero invariant, then $X^u = \overline{c_{00}}^X$. \\
{\rm (c)} $X^u$ and $\overline{c_{00}}^X$ are closed subclasses of $X$. In particular, $X^u \stackrel{1}{\hookrightarrow} X$ and $\overline{c_{00}}^X \stackrel{1}{\hookrightarrow} X$.\\
{\rm (d)} If $X\stackrel{1}{\hookrightarrow} Y$ then $X^u \stackrel{1}{\hookrightarrow} Y^u$.
\end{proposition}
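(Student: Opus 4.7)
The plan is to derive everything from the iterated form of the finitely shrinking hypothesis: applying the defining deletion inequality $n-1$ times shows that whenever $(x_j)_{j=1}^\infty \in X(E)$, the tail $(x_n, x_{n+1}, \ldots)$ lies in $X(E)$ with norm no larger than $\|(x_j)_{j=1}^\infty\|_{X(E)}$. This single observation drives nearly all of the verifications.

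For (a), I would first check that $X^u(E)$ is a linear subspace of $X(E)$ by applying the triangle inequality tail-by-tail; that $c_{00}(E) \subseteq X^u(E)$ since an eventually-zero sequence has tails that coincide with the zero sequence, of norm $0$ in $X(E)$; and that $\|e_j\|_{X^u(\mathbb{K})}=1$ together with $X^u \stackrel{1}{\hookrightarrow} \ell_\infty(\cdot)$ are inherited from $X$. The stronger embedding $X^u \stackrel{1}{\hookrightarrow} c_0(\cdot)$ follows from $\|x_n\| \leq \|(x_n, x_{n+1}, \ldots)\|_{\ell_\infty(E)} \leq \|(x_n, x_{n+1}, \ldots)\|_{X(E)} \to 0$. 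For completeness I would show that $X^u(E)$ is closed in $X(E)$: if $x^{(k)} \in X^u(E)$ converges to $x$ in $X(E)$, the iterated tail inequality gives $\|(x-x^{(k)})_{j\geq n}\|_{X(E)} \leq \|x-x^{(k)}\|_{X(E)}$, and an $\varepsilon/2$ splitting against the known vanishing of the tails of $x^{(k)}$ forces $\|(x_j)_{j\geq n}\|_{X(E)} \to 0$. Linear stability of $X^u$ transfers from that of $X$ applied to tails. The class $X^u$ is itself finitely shrinking because, after deleting the $k$-th term of $(x_j) \in X^u(E)$, the reindexed tails for $n > k$ are literal tails $(x_{n+1}, x_{n+2}, \ldots)$ of the original sequence, whose $X(E)$-norms already tend to $0$. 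The identity $(X^u)^u = X^u$ is immediate since $X^u$ carries the same norm as $X$.

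For (b), the inclusion $\overline{c_{00}}^X \subseteq X^u$ is an approximation argument: given $x^{(k)} \in c_{00}(E)$ with $x^{(k)} \to x$ in $X(E)$ and $\mathrm{supp}(x^{(k)}) \subseteq \{1,\ldots,N_k\}$, for $n > N_k$ the tail inequality gives $\|(x_n, x_{n+1}, \ldots)\|_{X(E)} = \|(x - x^{(k)})_{j \geq n}\|_{X(E)} \leq \|x - x^{(k)}\|_{X(E)}$, which can be made arbitrarily small. The reverse inclusion is where finitely zero invariance enters crucially: the truncations $y^{(n)} = (x_1, \ldots, x_{n-1}, 0, 0, \ldots)$ lie in $c_{00}(E)$, and finitely zero invariance identifies $\|x - y^{(n)}\|_{X(E)}$ with $\|(x_n, x_{n+1}, \ldots)\|_{X(E)}$, which tends to $0$ since $x \in X^u(E)$.

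Parts (c) and (d) are then bookkeeping. Closedness of $X^u$ in $X$ was shown en route to (a), closedness of $\overline{c_{00}}^X$ in $X$ is by definition, and the matching of norms is built into both definitions. For (d), the hypothesis $X \stackrel{1}{\hookrightarrow} Y$ yields $\|(x_n, x_{n+1}, \ldots)\|_{Y(E)} \leq \|(x_n, x_{n+1}, \ldots)\|_{X(E)}$, so the tails vanish in the $Y$-norm as soon as they do in the $X$-norm, giving the desired inclusion and norm comparison. The points to watch are the reindexing needed to conclude $X^u$ is finitely shrinking in (a) and the pinpoint use of finitely zero invariance in (b); everything else is routine.
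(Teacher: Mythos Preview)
Your argument is correct. The paper explicitly omits the proof of this proposition (it states ``We skip the proofs'' immediately before it and attributes the construction $X \mapsto X^u$ to an unpublished thesis), so there is no paper proof to compare against. Your outline covers all four parts soundly: the iterated tail inequality derived from the finitely shrinking hypothesis is indeed the engine for everything, the $\varepsilon/2$ closedness argument for $X^u(E)$ in $X(E)$ is standard and correct, the reindexing observation that tails of $(x_j)_{j\neq k}$ beyond position $k$ coincide with tails of the original sequence handles the finitely shrinking property of $X^u$, and you correctly isolate finitely zero invariance as precisely the tool needed to strip the leading zeros in the $X^u \subseteq \overline{c_{00}}^X$ direction of (b) --- which matches the paper's remark that this is the ``subtle detail'' separating the two classes in general.
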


%The next result is a straightforward combination of the proposition above with Theorem

\begin{corollary}\label{pl9h} Let $X$ and $Y$ be finitely determined sequence classes with $X$ finitely shrinking. The ideal $\Pi_{X_1,\ldots,X_n;Y}$ is symmetric for every $n \geq 2$ and all $X_i \in \{\overline{c_{00}}^X ,X^u,X\}$, $i=1,\ldots,n$.
\end{corollary}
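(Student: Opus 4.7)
The plan is to derive this directly from Theorem \ref{cap3T1.1} by showing that any choice of $X_1, \ldots, X_n$ from the three classes $\{\overline{c_{00}}^X, X^u, X\}$ is jointly dominated, with $X$ itself as the dominating class. Since $Y$ is assumed finitely determined, the theorem then immediately yields the symmetry of $\Pi_{X_1, \ldots, X_n;Y}$.

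To verify joint domination, I would take $X$ as the candidate dominating class; it is finitely determined by hypothesis. There are two conditions to check for each $X_i$. First, the embedding $X_i \stackrel{1}{\hookrightarrow} X$: this is trivial when $X_i = X$, and follows from Proposition \ref{propxu}(c) in the other two cases, which states precisely that $\overline{c_{00}}^X$ and $X^u$ are closed subclasses of $X$ (hence $1$-embedded). Second, the condition $X_i \stackrel{\textrm{fin}}{\leq} X$: since $\overline{c_{00}}^X$ and $X^u$ are closed subclasses of $X$ with the norm inherited from $X$, the norms actually coincide on all of $X_i(E)$, and in particular on $c_{00}(E) \subseteq X_i(E) \subseteq X(E)$. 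Of course, for $X_i = X$ we have equality of norms on $c_{00}(E)$ as well.

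With both conditions verified, the classes $X_1, \ldots, X_n$ are jointly dominated by $X$ in the sense of the definition preceding Theorem \ref{cap3T1.1}. Applying that theorem with this $X$ and the given $Y$ yields that $\Pi_{X_1, \ldots, X_n;Y}$ is symmetric, completing the proof.

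I do not foresee any real obstacle here: the work was already absorbed into Proposition \ref{propxu}(c), which ensures that $\overline{c_{00}}^X$ and $X^u$ sit inside $X$ isometrically, and into the fact that every sequence class contains $c_{00}(E)$ with the relevant norm. The corollary is essentially a packaging of Theorem \ref{cap3T1.1} tailored to the two constructions introduced in this section.
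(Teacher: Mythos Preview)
Your argument is correct and follows exactly the route taken in the paper: invoke Proposition~\ref{propxu}(c) to see that $\overline{c_{00}}^X$ and $X^u$ are closed subclasses of $X$, hence $X_i \stackrel{1}{\hookrightarrow} X$ and $X_i \stackrel{\rm fin}{=} X$ for each $i$, so the $X_i$'s are jointly dominated by $X$ and Theorem~\ref{cap3T1.1} applies. The paper additionally cites Proposition~\ref{propxu}(a) to record that $X^u$ is a linearly stable sequence class, but otherwise the two proofs coincide.
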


\begin{proof} By Proposition \ref{propxu}(a) and (c) we have that $X_1, \ldots, X_n$ are jointly dominated by $X$. The result follows from Theorem  \ref{cap3T1.1}.
\end{proof}

We are ready to give the first examples of non trivial sequential symmetric ideals.

\begin{example}\rm Let $1 \leq p < \infty$. Since $\ell_p^w$ is finitely determined and $(\ell_p^w)^u = \ell_p^u \neq \ell_p^w$, $\Pi_{\ell_p^{\theta_1},\ldots, \ell_p^{\theta_n};Y }$ is a non trivial sequential symmetric ideal for every $n \geq 2$, all $\theta_1, \ldots, \theta_n \in \{u,w\}$ with $\theta_i = u$ and $\theta_j = w$ for some $i$ and $j$, and every finitely determined sequence class $Y \neq \ell_\infty(\cdot)$. %As particular instances, $\Pi_{\ell_p^w, \ell_p^u; \ell_p^w}$ and $\Pi_{\ell_p^w, \ell_p^u; \ell_p(\cdot)}$ are non trivial sequential symmetric ideals. % to check that $\Pi_{\ell_p^w, \ell_p^u; \ell_p^w} \neq {\cal L}^2$. Note that in the proof of Theorem \ref{cap3T1.1} we proved that $\Pi_{\ell_p^w, \ell_p^u; \ell_p^w} = \Pi_{\ell_p^w, \ell_p^w; \ell_p^w}$. Using that $\ell_p^w$ is not multilinearly symmetric \cite[Theorem ???]{botelho+campos}, we have $\Pi_{\ell_p^w, \ell_p^u; \ell_p^w} \neq {\cal L}^2$, that is, this is a non trivial sequential symmetric ideal of bilinear operators. Since $\ell_p(\cdot)$ is finitely determined and  $\Pi_{\ell_p^w, \ell_p^u; \ell_p(\cdot)} \subseteq \Pi_{\ell_p^w, \ell_p^u; \ell_p^w} $, $\Pi_{\ell_p^w, \ell_p^u; \ell_p(\cdot)}$ is a non trivial sequential symmetric ideal of bilinear operators as well.
\end{example}

Some preparation is needed to give further concrete applications of Corollary \ref{pl9h}.

\begin{lemma}\label{lemarad} The sequence class {\rm RAD} is finitely shrinking and finitely zero invariant.
\end{lemma}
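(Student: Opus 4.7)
The proof plan rests on the $L_2$-description of the RAD-norm: for a sequence $(x_j)_{j=1}^\infty \subseteq E$,
$$\|(x_j)_{j=1}^\infty\|_{{\rm RAD}(E)} = \sup_{n \in \mathbb{N}}\left\| \sum_{j=1}^n r_j x_j \right\|_{L_2([0,1]; E)},$$
where $(r_j)_j$ is the Rademacher sequence. The two properties to be proved boil down to two clean facts about such Rademacher sums. First, the partial-sum norms are monotone in $n$: conditioning on $r_1,\ldots,r_n$ and averaging over the independent $r_{n+1}$, the convexity of $\|\cdot\|^2$ gives
$$\left\|\sum_{j=1}^{n+1} r_j x_j\right\|_{L_2}^{2} = \mathbb{E}\,\tfrac{1}{2}\bigl(\|S_n + x_{n+1}\|^2 + \|S_n - x_{n+1}\|^2\bigr) \geq \mathbb{E}\|S_n\|^2 = \left\|\sum_{j=1}^n r_j x_j\right\|_{L_2}^{2},$$
where $S_n = \sum_{j=1}^n r_j x_j$. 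Second, since the Rademachers are i.i.d., for each fixed $k$ the random vectors $(r_1,\ldots,r_{k-1},r_{k+1},\ldots,r_{n+1})$ and $(r_1,\ldots,r_n)$ have the same joint distribution, so that ``re-indexing" sums is free in the $L_2$-norm.

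For the finitely shrinking property, fix $(x_j)_j \in {\rm RAD}(E)$ and $k \in \mathbb{N}$, and set $(y_i)_{i=1}^\infty = (x_j)_{j \neq k}$. The plan is to show, for $n \geq k$, that
$$\left\|\sum_{i=1}^{n} r_i y_i\right\|_{L_2} \;\overset{(\ast)}{=}\; \left\|\sum_{\substack{1 \leq i \leq n+1 \\ i \neq k}} r_i x_i\right\|_{L_2} \;\overset{(\ast\ast)}{\leq}\; \left\|\sum_{i=1}^{n+1} r_i x_i\right\|_{L_2} \;\leq\; \|(x_j)_j\|_{{\rm RAD}(E)},$$
where $(\ast)$ is the i.i.d. re-indexing mentioned above and $(\ast\ast)$ is the averaging-over-$r_k$ convexity bound applied with $U = \sum_{i \neq k,\, i \leq n+1} r_i x_i$ playing the role of $S_n$. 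The case $n < k$ is trivial since then $\sum_{i=1}^n r_i y_i = \sum_{i=1}^n r_i x_i$. Taking the supremum over $n$ yields $\|(y_i)_i\|_{{\rm RAD}(E)} \leq \|(x_j)_j\|_{{\rm RAD}(E)}$.

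For finitely zero invariance, observe that when $x_k = 0$ the inequality $(\ast\ast)$ becomes an equality (the averaged term $\mathbb{E}\|U + r_k x_k\|^2$ is exactly $\mathbb{E}\|U\|^2$), so
$$\left\|\sum_{i=1}^n r_i y_i\right\|_{L_2} = \left\|\sum_{i=1}^{n+1} r_i x_i\right\|_{L_2} \quad (n \geq k).$$
Combined with the monotonicity established at the start, this identity shows that both suprema coincide, so $(x_j)_j$ and $(y_i)_i$ are simultaneously in ${\rm RAD}(E)$ with equal norm. The converse direction---from $(y_i)_i \in {\rm RAD}(E)$ to $(x_j)_j \in {\rm RAD}(E)$ when $x_k=0$---is handled by reversing the same chain, reading off that $\|\sum_{i=1}^{m} r_i x_i\|_{L_2}$ equals some $\|\sum_{i=1}^{m-1} r_i y_i\|_{L_2}$ for $m \geq k+1$ (and is trivially bounded for $m \leq k$).

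The only delicate point is the bookkeeping for the index shift in $(\ast)$: one must carefully invoke the i.i.d. distribution of the Rademachers to identify the $L_2$-norm of the ``gappy'' sum $\sum_{i \neq k} r_i x_i$ with the $L_2$-norm of the consecutively indexed sum $\sum r_i y_i$. Once this identification is made, the convexity step $(\ast\ast)$ and the telescoping in the zero-invariant case are entirely routine.
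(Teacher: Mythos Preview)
Your proof is correct and at heart relies on the same sign-average computation as the paper: both arguments exploit that the Rademacher/sign partial-sum norms are unaffected by deleting a zero term (finitely zero invariant) and are non-increased by deleting an arbitrary term (finitely shrinking). The difference is organizational. The paper first proves finite zero invariance by the discrete $2^{-n}\sum_{\varepsilon_i=\pm1}$ calculation (your step $(\ast)$ and the equality case of $(\ast\ast)$ in disguise) and then obtains finite shrinking by invoking Kahane's contraction principle as a black box. You reverse the order and, more importantly, replace the citation of Kahane by the elementary convexity estimate $\tfrac12(\|U+x_k\|^2+\|U-x_k\|^2)\ge\|U\|^2$, which is exactly the special case of Kahane needed here. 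Your argument is therefore self-contained, at the small cost of tracking the index shift $(\ast)$ explicitly; the paper's version is shorter but outsources the key inequality.
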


\begin{proof} Let $(x_j)_{j=1}^\infty \subseteq E$ and $k \in \mathbb{N}$ be so that $x_k = 0$. Given $n$, the symbol $\varepsilon_1,\stackrel{[k]}{\ldots},\varepsilon_{n}$ means $\varepsilon_1,\ldots, \varepsilon_{k-1}, \varepsilon_{k+1}, \ldots \varepsilon_{n}$. We have
\begin{align*}
	&\|( x_1,\ldots,x_{k-1},x_{k+1},\ldots, x_n,0,0,\ldots)\|^2_{{\rm Rad}(E)}\\
		&= \frac{1}{2^{n-1}}\cdot \sum_{\varepsilon_1,\ldots,\varepsilon_{n-1}=\pm 1} \|\varepsilon_1x_1+\cdots+\varepsilon_{k-1}x_{k-1} +\varepsilon_{k}x_{k+1}+\cdots+ \varepsilon_{n-1}x_n \|^2 \\
	&= \frac{1}{2^{n}} \cdot \sum_{\varepsilon_1,\stackrel{[k]}{\ldots},\varepsilon_{n}=\pm 1} 2\|\varepsilon_1x_1+\cdots+\varepsilon_{k-1}x_{k-1} +\varepsilon_{k+1}x_{k+1}+\cdots+ \varepsilon_{n}x_n \|^2\\
	%&= \left(\frac{1}{2^{n}} \sum_{\varepsilon_1,\stackrel{[k]}{\ldots},\varepsilon_{n}=\pm 1} 2\|\varepsilon_1x_1+\cdots+\varepsilon_{k-1}x_{k-1} +\varepsilon_{k+1}x_{k+1}+\cdots+ \varepsilon_{n}x_n \|^2 \right)^{1/2}\\
	&= \frac{1}{2^{n}} \cdot \sum_{\varepsilon_1,\stackrel{[k]}{\ldots},\varepsilon_{n}=\pm 1} \sum_{\varepsilon_k=\pm 1}\|\varepsilon_1x_1+\cdots+\varepsilon_{k-1}x_{k-1}+\varepsilon_kx_k +\varepsilon_{k+1}x_{k+1}+\cdots+ \varepsilon_{n}x_n \|^2\\
	&= \frac{1}{2^{n}} \cdot\sum_{\varepsilon_1,\ldots,\varepsilon_{n}=\pm 1} \|\varepsilon_1x_1+\cdots+ \varepsilon_{n}x_n \|^2%= \int_{0}^{1} \|  r_1(t)x_1+\cdots+ r_{n}(t)x_n \|^2 dt\right)^{1/2}\\
%	&= \left(\int_{0}^{1} \|  r_1(t)x_1+\cdots+ r_{n}(t)x_n \|^2 dt\right)^{1/2}\\
	= \|( x_1,\ldots, x_n,0,0,\ldots)\|^2_{{\rm Rad}(E)}.
	%&= \|( x_1,\ldots,x_{k-1},0,x_{k+1},\ldots,)\|_{\text{RAD}(E)},\\
	\end{align*}
Taking the supremum over $n$ we conclude that RAD is finitely zero invariant. Combining this information with Kahane's contraction principle \cite[12.2]{diestel+jarchow+tonge}, it follows easily that RAD is finitely shrinking.
\end{proof}

\begin{proposition} {\rm (a)} ${\rm RAD}^u = {\rm Rad} = {\rm Rad}^u$.\\
{\rm (b)} For $1 \leq p < \infty$, $(\ell_p^{\rm mid})^u\neq \ell_p^{\rm mid}$ and  $(\ell_p^{\rm mid})^u\neq \ell_p^u$.
\end{proposition}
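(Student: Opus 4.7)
The plan for part (a) is to exploit the exchangeability of the Rademacher system $(r_j)_{j=1}^\infty$ to identify $\mathrm{RAD}^u(E)$ with the Cauchy criterion, in $L^2([0,1];E)$, for the partial sums $S_N = \sum_{j=1}^N r_j x_j$; this criterion is by definition equivalent to $(x_j) \in \mathrm{Rad}(E)$. Explicitly, since $(r_n,\ldots,r_N)$ and $(r_1,\ldots,r_{N-n+1})$ have the same joint distribution,
\[
\|(x_n,x_{n+1},\ldots)\|_{\mathrm{RAD}(E)} \;=\; \sup_{N\geq n}\Big\|\sum_{j=n}^N r_j x_j\Big\|_{L^2([0,1];E)}
\]
for every $(x_j)\in \mathrm{RAD}(E)$. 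Letting $n\to\infty$, the condition $(x_j)\in \mathrm{RAD}^u(E)$ becomes the Cauchy criterion for $(S_N)$ in the complete space $L^2([0,1];E)$, hence equivalent to convergence of $(S_N)$, that is, $(x_j)\in \mathrm{Rad}(E)$. This yields $\mathrm{RAD}^u = \mathrm{Rad}$. Proposition~\ref{propxu}(a) applied to $\mathrm{RAD}$ (finitely shrinking by Lemma~\ref{lemarad}) then closes the loop: $\mathrm{Rad}^u = (\mathrm{RAD}^u)^u = \mathrm{RAD}^u = \mathrm{Rad}$.

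For part (b), the first move is general. From the contractive inclusion $\ell_p^{\mathrm{mid}} \stackrel{1}{\hookrightarrow} \ell_p^w$ (see \cite{botelho+campos+santos}) and Proposition~\ref{propxu}(d) we obtain $(\ell_p^{\mathrm{mid}})^u \stackrel{1}{\hookrightarrow} (\ell_p^w)^u = \ell_p^u$. To show $(\ell_p^{\mathrm{mid}})^u \neq \ell_p^{\mathrm{mid}}$, I would exhibit a mid $p$-summable sequence that is not unconditionally $p$-summable, i.e., any $(x_j)\in \ell_p^{\mathrm{mid}}(E)\setminus \ell_p^u(E)$ in some suitable $E$: its $\ell_p^w$-tail does not vanish, and by the contractive inclusion its $\ell_p^{\mathrm{mid}}$-tail does not vanish either, so $(x_j)\notin (\ell_p^{\mathrm{mid}})^u(E)$. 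Such sequences are documented in \cite{botelho+campos+santos} via the strictness of $\ell_p^{\mathrm{mid}}(E)\supsetneq \ell_p^u(E)$ for appropriate choices of $E$.

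For $(\ell_p^{\mathrm{mid}})^u \neq \ell_p^u$, the contractive containment above reduces the task to producing $(x_j)\in \ell_p^u(E)\setminus (\ell_p^{\mathrm{mid}})^u(E)$; equivalently, either an unconditionally $p$-summable sequence that is not even mid $p$-summable, or a mid $p$-summable $(x_j)\in \ell_p^u(E)$ whose $\ell_p^{\mathrm{mid}}$-tails stay bounded away from $0$. The main obstacle is this explicit construction: the scaling of the sequence must be tuned so that the $\ell_p^w$-tails collapse, while the stronger $\ell_p^{\mathrm{mid}}$-norm, which tests simultaneously against entire weakly $p$-summable families $(\phi_k)\in \ell_p^w(E^*)$ rather than single functionals, keeps mass on every tail. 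Natural candidates will come from the Banach spaces and diluted coordinate-vector constructions already used in \cite{botelho+campos+santos} to separate $\ell_p$, $\ell_p^{\mathrm{mid}}$, $\ell_p^u$, and $\ell_p^w$.
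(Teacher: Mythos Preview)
Your treatment of (a) is correct and takes a more hands-on route than the paper: you identify $\mathrm{RAD}^u(E)$ directly with the Cauchy criterion for the Rademacher partial sums in $L^2([0,1];E)$ via exchangeability, whereas the paper quotes \cite[Proposition~5.1(b)]{vakhania} for $\overline{c_{00}}^{\mathrm{RAD}}=\mathrm{Rad}$ and then invokes Proposition~\ref{propxu}(b) (using that $\mathrm{RAD}$ is finitely zero invariant, Lemma~\ref{lemarad}). Your argument is self-contained; the paper's is shorter but delegates the substance to a reference. Both then close $\mathrm{Rad}^u=(\mathrm{RAD}^u)^u=\mathrm{RAD}^u=\mathrm{Rad}$ the same way.

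For (b) you have the right ingredients but do not assemble them, and you leave the second inequality as an open ``obstacle''. The paper's argument is a two-line contradiction based only on the trivial inclusion $X^u\subseteq X$ (Proposition~\ref{propxu}(c)) and the \emph{incomparability} of $\ell_p^{\mathrm{mid}}$ and $\ell_p^u$ \cite[Example~1.7]{botelho+campos+santos}: having established $(\ell_p^{\mathrm{mid}})^u\subseteq\ell_p^u$ exactly as you do, one argues that if $(\ell_p^{\mathrm{mid}})^u=\ell_p^{\mathrm{mid}}$ then $\ell_p^{\mathrm{mid}}\subseteq\ell_p^u$, and if $(\ell_p^{\mathrm{mid}})^u=\ell_p^u$ then $\ell_p^u=(\ell_p^{\mathrm{mid}})^u\subseteq\ell_p^{\mathrm{mid}}$; either inclusion contradicts incomparability. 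Your ``option~1'' for the second inequality (a sequence in $\ell_p^u\setminus\ell_p^{\mathrm{mid}}$) is precisely this argument unwound, so no explicit construction is needed---incomparability hands you the sequence and $(\ell_p^{\mathrm{mid}})^u\subseteq\ell_p^{\mathrm{mid}}$ finishes. Note also that your claimed strict containment $\ell_p^{\mathrm{mid}}(E)\supsetneq\ell_p^u(E)$ is at odds with the very incomparability result you cite; what \cite{botelho+campos+santos} provides is that neither class is contained in the other.
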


\begin{proof} (a) We have
$$\text{RAD}^u=\overline{c_{00}}^{\text{RAD}} = {\rm Rad} = \text{RAD}^u = (\text{RAD}^u)^u = {\rm Rad}^u,$$
where the first equality follows from a combination of Lemma \ref{lemarad} with Proposition \ref{propxu}(b), the second from  %$\text{RAD}^u=\overline{c_{00}}^{\text{RAD}}(\cdot)$. According to
\cite[Proposition 5.1(b)]{vakhania}, the third has already been proved, the fourth follows from Proposition \ref{propxu}(a) and the last one from the already proved equality ${\rm RAD}^u = {\rm Rad}$.\\%, $\overline{c_{00}}^{\text{RAD}(\cdot)}(\cdot)= \text{Rad}(\cdot) $. Portanto $\text{RAD}^u(\cdot)=\overline{c_{00}}^{\text{RAD}(\cdot)}(\cdot)= \text{Rad}(\cdot).$ \\
(b) Since $\ell_p^{\rm mid} \stackrel{1}{\hookrightarrow} \ell_p^w$, by Proposition \ref{propxu}(d) we have $(\ell_p^{\rm mid})^u \stackrel{1}{\hookrightarrow} (\ell_p^w)^u=\ell_p^u.$ Supposing that $(\ell_p^{\rm mid})^u= \ell_p^{\rm mid}$ or $(\ell_p^{\rm mid})^u= \ell_p^u$, we would have $\ell_p^{\rm mid}=(\ell_p^{\rm mid})^u \subseteq \ell_p^u$ or
	$\ell_p^u=(\ell_p^{\rm mid})^u \subseteq \ell_p^{\rm mid}$; but this cannot happen because the classes $\ell_p^{\rm mid}$ and $\ell_p^u$ are incomparable \cite[Example 1.7]{botelho+campos+santos}.
\end{proof}

The following examples follow from the proposition above and Corollary \ref{pl9h}.

\begin{example}\rm Since RAD is finitely determined, for every $n \geq 2$, every finitely determined sequence class $Y \neq \ell_\infty(\cdot)$ and all sequences classes $X_1, \ldots, X_n \in \{{\rm RAD}, {\rm Rad}\}$ with $X_i = {\rm RAD}$ and $X_j = {\rm Rad}$ for some $i$ and $j$, $\Pi_{X_1, \ldots, X_n;Y}$ is  a non trivial sequential symmetric ideal.
\end{example}

\begin{example}\rm Let $1 \leq p < \infty$. Since $\ell_p^{\rm mid}$ is finitely determined, $\Pi_{X_1, \ldots, X_n;Y }$ is a non trivial sequential symmetric ideal for every $n \geq 2$, all $X_1, \ldots, X_n \in \{\ell_p^{\rm mid}, (\ell_p^{\rm mid})^u\}$ with $X_i = \ell_p^{\rm mid}$ and $X_j = (\ell_p^{\rm mid})^u$ for some $i$ and $j$, and every finitely determined sequence class $Y \neq \ell_\infty(\cdot)$.
\end{example}

\section{The procedure $X \mapsto X^{\rm fd}$}

%This section has a twofold purpose. First
Note that the symmetric ideals we constructed in the previous section are of the type $\Pi_{X_1, \ldots, X_n;Y }$ where $Y$ is always finitely determined. The good news is that several usual sequence classes are finitely determined, such as $\ell_p(\cdot), \ell_p^w, \ell_\infty$, RAD, $\ell_p\langle\cdot\rangle$ and $\ell_p^{\rm mid}$; and the bad news is that some are not, such as $c_0(\cdot), c_0^w$, $c(\cdot), \ell_p^{u}$, Rad and $(\ell_p^{\rm mid})^u$. The purpose of this section is to construct non trivial sequential symmetric ideals for $Y$ not necessarily finitely determined. %Until now we have not shown the existence a non trivial symmetric ideal of the type $\Pi_{X_1, X_2, X_3;Y}$ for which the classes $X_1, X_2, X_3$ are pairwise distinct. The second purpose of this section is to show that such ideals do exist. %Of course, other conditions will be imposed on the underlying sequence classes, but examples not covered by the previous section will be obtained.

%\begin{definition}\rm closed subclass
%\end{definition}

\begin{proposition} \label{cap3T01} Let $X_1,\ldots,X_n,Y,Z,W$ be sequence classes such that $Z$ and $W$ are finitely determined and finitely shrinking. %class of $W$ and $X_i \stackrel{1}{\hookrightarrow} Z$, $i=1,\ldots,n$.  %são finitamente contráteis. Suponha que;
Suppose also that
%	\begin{enumerate}[label=$\bullet$]
		$W\stackrel{\rm fin}{\leq} Y$, $X_i \subseteq Z$ and $X_i\stackrel{\rm fin}{\leq}Z$ for every $i=1,\ldots ,n$,
		$X_k \subseteq Z^u$ for some $k\in\{1,\ldots,n\}$ and $W^u\subseteq Y.$ Then $\Pi_{X_1,\ldots,X_n;Y} =  \Pi_{^nZ;W}$. %\stackrel{}{=}\mathcal{L}_{Z_1,\ldots,Z_n;W}.$$
%	Se $Y$ for subclasse fechada de $W$ e $X_i \stackrel{1}{\hookrightarrow} Z_i, i=1,\ldots,n$ então vale a igualdade de normas.
%\end{teorema}
\end{proposition}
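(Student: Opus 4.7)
The plan is to prove the two set-theoretic inclusions separately, via truncation/tail arguments that lean crucially on $Z$ and $W$ being finitely determined and finitely shrinking.

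For $\Pi_{X_1,\ldots,X_n;Y} \subseteq \Pi_{^nZ;W}$, I take $A$ in the left-hand class and arbitrary $(z_j^i)_{j=1}^\infty \in Z(E_i)$. First I truncate: each $(z_j^i)_{j=1}^k$ padded with zeros lies in $c_{00}(E_i) \subseteq X_i(E_i)$, and by $X_i \stackrel{\rm fin}{\leq} Z$ together with the finitely determined/shrinking character of $Z$, its $X_i$-norm is bounded by $\|(z_j^i)_{j=1}^\infty\|_{Z(E_i)}$. Applying the $(X_1,\ldots,X_n;Y)$-summability of $A$ yields a uniform bound on the $Y$-norm of the correspondingly truncated image sequence. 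The hypothesis $W \stackrel{\rm fin}{\leq} Y$ transfers this bound to the $W$-norm on $c_{00}(F)$, and since $W$ is finitely determined, taking $\sup_k$ upgrades the truncation bounds to a $W$-norm estimate for the full image sequence, placing $A$ in $\Pi_{^nZ;W}$.

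For the reverse inclusion $\Pi_{^nZ;W} \subseteq \Pi_{X_1,\ldots,X_n;Y}$, I take $A$ in the right-hand class and $(x_j^i)_{j=1}^\infty \in X_i(E_i)$. Since $X_i \subseteq Z$ for every $i$, the image sequence $(A(x_j^1,\ldots,x_j^n))_{j=1}^\infty$ automatically lies in $W(F)$. The key refinement is to promote this to membership in $W^u(F)$, after which $W^u \subseteq Y$ finishes the job. The plan here is a tail estimate: because $Z$ is finitely shrinking, every tail $(x_j^i)_{j=n}^\infty$ is again in $Z(E_i)$ with norm bounded by $\|(x_j^i)_{j=1}^\infty\|_{Z(E_i)}$, so applying the $(^nZ;W)$-summability of $A$ to these shifted sequences gives
$$\|(A(x_j^1,\ldots,x_j^n))_{j=n}^\infty\|_{W(F)} \leq \|A\|_{\Pi_{^nZ;W}} \cdot \prod_{i=1}^n \|(x_j^i)_{j=n}^\infty\|_{Z(E_i)}.$$
The factor for $i=k$ tends to $0$ as $n\to\infty$ precisely because $(x_j^k)_{j=1}^\infty \in X_k \subseteq Z^u$, while the remaining factors stay bounded. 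Hence the tail $W$-norm of the image vanishes, so the image belongs to $W^u(F) \subseteq Y(F)$.

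The main obstacle is this second inclusion: without $X_k \subseteq Z^u$ one could only conclude that the image sits in $W(F)$, which need not be inside $Y(F)$ since only $W^u \subseteq Y$ and $W \stackrel{\rm fin}{\leq} Y$ are assumed. The delicate step is to verify that tails of $Z$-sequences remain in $Z$ with the right norm control so that the summability inequality can be iterated across shifts — this is exactly what the finitely shrinking hypothesis on $Z$ supplies.
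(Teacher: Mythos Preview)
Your proposal is correct and follows essentially the same route as the paper: the first inclusion via truncation using $X_i\stackrel{\rm fin}{\leq}Z$, $W\stackrel{\rm fin}{\leq}Y$ and the finite determinacy of $Z$ and $W$, and the reverse inclusion via a tail estimate using the finitely shrinking property of $Z$ together with $X_k\subseteq Z^u$ to force the image into $W^u(F)\subseteq Y(F)$. The only cosmetic difference is that in the first inclusion the paper bounds truncated $W$-norms by truncated $Z$-norms and then takes the supremum, whereas you first bound the truncated $Z$-norm by the full $Z$-norm (using finite determinacy) before taking $\sup_k$; both variants are valid.
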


\begin{proof} Let $A\in\Pi_{X_1,\ldots,X_n;Y}(E_1,\ldots,E_n;F) $ be given. Using that $W\stackrel{\rm fin}{\leq} Y$ and $X_i\stackrel{\rm fin}{\leq}Z$, for $m\in\mathbb{N}$ and $x_j^i\in E_i$, $j=1,\ldots, m$, $i=1,\ldots, n$, we have
	\begin{align*}
	\|(A(x_j^1,\ldots ,x_j^n))_{j=1}^{m}\|_{W(F)}&\leq\|(A(x_j^1,\ldots ,x_j^n))_{j=1}^{m}\|_{Y(F)}\leq \|A\|_{X_1,\ldots,X_n;Y}\prod_{i=1}^n \|(x_j^i)_{j=1}^m\|_{X_i(E_i)}\\
	&\leq \|A\|_{X_1,\ldots,X_n;Y}\prod_{i=1}^n \|(x_j^i)_{j=1}^m\|_{Z(E_i)}.
	\end{align*}
%	para todo $m\in\mathbb{N}$ e para todos $x_j^i\in E_i$, $j=1,\ldots, m$, $i=1,\ldots, n$.
Since $Z$ and $W$ are finitely determined, it follows that $A\in \Pi_{^nZ;W}(E_1,\ldots,E_n;F) $ and $\|A\|_{^nZ;W}\leq \|A\|_{X_1,\ldots,X_n;Y}$.

Now suppose that $A\in \Pi_{^nZ;W}(E_1,\ldots,E_n;F) $. Let $ (x_j^i)_{j=1}^\infty \in X_i(E_i)\subseteq  Z(E_i), i=1\ldots n $. %Devemos mostrar que $ (A(x_{j}^1,\ldots ,x_{j}^n))_{j=1}^{\infty} \in Y(F)$.
	Using that $Z$ is finitely shrinking, we get $(x_j^i)_{j=m}^\infty \in Z(E_i)$ and  $\|(x_j^i)_{j=m}^\infty\|_{Z(E_i)}\leq \|(x_j^i)_{j=1}^\infty\|_{Z(E_i)}$ for all $m\in\mathbb{N}$ and $i=1\ldots n$. Thus, $(A(x_{j}^1,\ldots ,x_{j}^n))_{j=m}^{\infty} \in W(F)$ and, since $(x_j^k)_{j=m}^\infty \in Z^u(E_k)$,
	\begin{align*}
	\lim_{m\to\infty}\|(A(x_{j}^1,& \ldots , x_{j}^n))_{j=m}^{\infty}  \|_{W(F)} \leq \lim_{m\to\infty} \|A\|_{^nZ;W}\cdot \prod_{i=1}^n  \|(x_{j}^i)_{j=m}^{\infty}\|_{Z(E_i)}\\
	&\leq \|A\|_{^nZ;W}\cdot \lim_{m\to\infty} \left[\left( \prod_{i=1,i\neq k}^n  \|(x_{j}^i)_{j=1}^{\infty}\|_{Z(E_i)}\right) \cdot  \|(x_{j}^k)_{j=m}^{\infty}\|_{Z(E_k)}\right] = 0.
	\end{align*}
    %$$\lim_{m\to\infty} \|(A(x_{j}^1,\ldots ,x_{j}^n))_{j=m}^{\infty}  \|_{W(F)}=0$$
This proves that $ (A(x_{j}^1,\ldots ,x_{j}^n))_{j=1}^{\infty} \in W^u(F) \subseteq Y(F)$, so $A\in \Pi_{X_1,\ldots,X_n;Y}(E_1,\ldots,E_n;F) $. The proof is complete. %We have just established the equality , from which the result follows. %, pois $X_k\subseteq Z_k^u$  e $W^u\subseteq Y$.
%Suponha que $Y$ seja subclasse fechada de $W$ e que $X_i \stackrel{1}{\hookrightarrow} Z_i, i=1,\ldots,n$. Para todos $(x_j^1)_{j=1}^\infty \in X_1(E_1), \ldots , (x_j^n)_{j=1}^\infty \in X_n(E_n) $
%	\begin{align*}
%	\|(A(x_{j}^1,\ldots ,x_{j}^n))_{j=1}^{\infty}  \|_{Y(F)} &= \|(A(x_{j}^1,\ldots ,x_{j}^n))_{j=1}^{\infty}  \|_{W(F)}\\
%	&\leq \|A\|_{Z_1,\ldots,Z_n;W} \prod_{i=1}^n  \|(x_{j}^i)_{j=1}^{\infty}\|_{Z_i(E_i)}\\
%	&\leq \|A\|_{Z_1,\ldots,Z_n;W} \prod_{i=1}^n  \|(x_{j}^i)_{j=1}^{\infty}\|_{X_i(E_i)},\\
%	\end{align*}
%	logo $ \|A\|_{X_1,\ldots,X_n;Y}\leq \|A\|_{Z_1,\ldots,Z_n;W}$.
\end{proof}

Next we show how the proposition above can be used to give new concrete examples of non trivial sequential symmetric ideals.

\begin{definition}\rm For a sequence class $X$ we define
	$$ X^{\rm fd}(E):=\left\{ (x_j)_{j=1}^\infty \in E^{\mathbb{N}}:\|(x_j)_{j=1}^\infty \|_{X^{\rm fd}(E)}: =\sup_{k}\|(x_j)_{j=1}^k\|_{X(E)}<\infty \right\} $$
for every Banach space $E$.
\end{definition}

\begin{example}\rm (a) For $1 \leq p < \infty$, $(\ell_p^u)^{\rm fd} = \ell_p^w = (\ell_p^w)^u$, $\ell_p(\cdot)^{\rm fd} = \ell_p(\cdot)$.

\medskip

\noindent (b) $c_0(\cdot)^{\rm fd} = (c_0^w)^{\rm fd} =  c(\cdot)^{\rm fd} =  \ell_\infty(\cdot)$ and ${\rm Rad}^{\rm fd} = {\rm RAD}$.

\medskip
\noindent(c) There are sequence classes $X$ for which $X^u \varsubsetneq X \varsubsetneq X^{\rm fd}$. For instance, taking $X = c_0^w$, we have
$$ (c_0^w)^u=c_0(\cdot) \varsubsetneq c_0^w \varsubsetneq \ell_\infty(\cdot) = (c_0^w)^{\rm fd}.$$
\end{example}

The properties of $X^{\rm fd}$ we shall need are the following.

\begin{proposition}\label{propxfd} Let $X$ be a sequence class. Then:\\
{\rm (a)} $X^{\rm fd}$ is a (linearly stable) sequence class and $X^{\rm fd} \stackrel{1}{=} X$ if and only if $X$ is finitely determined.\\
{\rm (b)} If $X$ is finitely shrinking, then $X^{\rm fd}$ is finitely shrinking and finitely determined.\\
{\rm (c)} If $X$ is finitely zero invariant, then $X^{\rm fd}$ is finitely zero invariant as well.\\
{\rm (d)} If $X$ is finitely shrinking and finitely zero invariant, then $\overline{c_{00}}^X \stackrel{1}{=} X^u \stackrel{1}{=} (X^{\rm fd})^u  \stackrel{1}{=} \overline{c_{00}}^{X^{\rm fd} }$. In particular, $\overline{c_{00}}^X = X^u$ is a closed subclass of $X^{\rm fd}$.
%{\rm (d)} If $X$ is finitely zero invariant, then $X^u = \overline{c_{00}}^X$ is a closed subclass of $X^{\rm fd}$.
\end{proposition}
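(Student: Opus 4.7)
The plan is to prove the four parts in order. Parts (a)--(c) are computations with the supremum-of-truncation-norms that defines $X^{\rm fd}$, and (d) reduces to two applications of Proposition \ref{propxu}(b) together with one substantive isometric identification.

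For (a), writing $y^{(k)} := (x_1,\ldots,x_k,0,0,\ldots)$, I would verify the sequence-class axioms one at a time. The inclusion $c_{00}(E) \subseteq X^{\rm fd}(E)$ holds because the truncation norms stabilize beyond the support; the contractive embedding $X^{\rm fd}(E) \stackrel{1}{\hookrightarrow} \ell_\infty(E)$ and the normalization $\|e_j\|_{X^{\rm fd}(\mathbb{K})}=1$ are immediate from the corresponding properties of $X$ at a single level, and linear stability passes through the supremum. The delicate point is completeness: for a Cauchy sequence $(x^{(n)})_n$ in $X^{\rm fd}(E)$, passing through $\ell_\infty(E)$ provides a candidate coordinatewise limit $x$, and for each fixed $k$ the level-$k$ truncations form a Cauchy sequence in the Banach space $X(E)$ (by the defining supremum) converging in $X$-norm to $y^{(k)}$; the uniform bound on $\|y^{(k)}\|_{X(E)}$ places $x \in X^{\rm fd}(E)$, while a parallel truncate-then-pass-to-limit estimate yields $x^{(n)} \to x$ in $X^{\rm fd}$-norm. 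The equivalence $X^{\rm fd}\stackrel{1}{=}X$ if and only if $X$ is finitely determined is literally the definition.

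For (b), finite shrinking of $X$ (delete the $(k+1)$-st coordinate of $y^{(k+1)}$) makes $k \mapsto \|y^{(k)}\|_{X(E)}$ nondecreasing, and a short calculation then gives $\|(x_j)_{j=1}^k\|_{X^{\rm fd}(E)} = \|y^{(k)}\|_{X(E)}$, so taking $\sup_k$ shows $X^{\rm fd}$ is finitely determined. Finite shrinking of $X^{\rm fd}$ itself is handled by computing the truncations of $(x_j)_{j\neq m}$: at level $k \geq m$ they are $y^{(k+1)}$ with the $m$-th term deleted, whose $X$-norm is at most $\|y^{(k+1)}\|_{X(E)} \leq \|x\|_{X^{\rm fd}(E)}$ by finite shrinking of $X$; at lower levels they coincide with a truncation of $x$. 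For (c), when $x_k=0$, finite zero invariance of $X$ lets one insert or remove the zero $k$-th coordinate freely inside every truncation norm, matching truncations of $(x_j)_{j\neq k}$ with truncations of $(x_j)$ (with an index shift when the level exceeds $k$) and thereby the two defining suprema.

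For (d), the outer equalities $\overline{c_{00}}^X \stackrel{1}{=} X^u$ and $\overline{c_{00}}^{X^{\rm fd}} \stackrel{1}{=} (X^{\rm fd})^u$ are Proposition \ref{propxu}(b) applied to $X$ and to $X^{\rm fd}$, the latter being finitely shrinking and finitely zero invariant by (b) and (c). The substance is the middle isometry $X^u \stackrel{1}{=} (X^{\rm fd})^u$. The key observation is that finite shrinking of $X$ forces the $X$- and $X^{\rm fd}$-norms to coincide on $c_{00}(E)$, since the defining supremum is attained at any level beyond the support. Hence a $c_{00}$-sequence is Cauchy in one of the two norms if and only if it is Cauchy in the other, and since both $X(E)$ and $X^{\rm fd}(E)$ contractively embed into $\ell_\infty(E)$ the two limits agree coordinatewise; the characterizations $X^u = \overline{c_{00}}^X$ and $(X^{\rm fd})^u = \overline{c_{00}}^{X^{\rm fd}}$ then translate this into the isometric equality.

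The main obstacle is the middle isometry in (d): everything rests on the innocuous-looking equality $\|y\|_X = \|y\|_{X^{\rm fd}}$ for $y \in c_{00}(E)$, which is exactly where finite shrinking of $X$ interacts with the sup-definition of $X^{\rm fd}$. Once this is in hand, the rest of (d) is a routine Cauchy-sequence transfer between $X$ and $X^{\rm fd}$.
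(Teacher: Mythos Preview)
Your proposal is correct. Parts (a)--(c) follow the paper's approach essentially verbatim; in particular the identity $\|(x_j)_{j=1}^k\|_{X^{\rm fd}(E)} = \|(x_j)_{j=1}^k\|_{X(E)}$ under finite shrinking is exactly the paper's equation (\ref{d4eb}), and it drives both (b) and (d) in both arguments.

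For (d) your route is a bit different and somewhat cleaner. The paper first proves \emph{directly} that $X^u$ is a closed subclass of $X^{\rm fd}$ by two tail-norm inequalities on an arbitrary $(x_j)\in X^u(E)$ (splitting $(x_j)_{j=1}^\infty = (x_j)_{j=1}^k + (0,\ldots,0,x_{k+1},\ldots)$ and using finite zero invariance to strip the zeros), thereby obtaining the ``In particular'' conclusion as an intermediate step; then it invokes Proposition~\ref{propxu}(a),(b),(d) to get $\overline{c_{00}}^X \stackrel{1}{=} X^u \stackrel{1}{=} (X^u)^u \stackrel{1}{\hookrightarrow} (X^{\rm fd})^u \stackrel{1}{=} \overline{c_{00}}^{X^{\rm fd}}$, and finally runs a Cauchy-sequence transfer (essentially your argument) to close the reverse inclusion $\overline{c_{00}}^{X^{\rm fd}} \subseteq \overline{c_{00}}^X$. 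You instead go straight to the identification $\overline{c_{00}}^X \stackrel{1}{=} \overline{c_{00}}^{X^{\rm fd}}$ via the single observation that the two norms agree on $c_{00}(E)$, using the contractive embeddings into $\ell_\infty(E)$ to match the limits; the ``In particular'' then falls out a posteriori since $\overline{c_{00}}^{X^{\rm fd}}$ is automatically a closed subclass of $X^{\rm fd}$. Your argument is shorter and avoids the explicit tail estimates; the paper's has the minor advantage of establishing the closed-subclass fact with a concrete norm identity on all of $X^u(E)$, which it later reuses.
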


\begin{proof} We skip the proof of (a). The completeness of $X^{\rm fd}(E)$ is proved by a standard argument with the help of \cite[Lemma 3.1]{botelho+campos}.\\
(b) Given a Banach space $E$, $k\in\mathbb{N}$ and $x_1,x_2,\ldots, x_k \in E$,
	\begin{equation} \|(x_j)_{j=1}^k\|_{X(E)}=\text{máx}\{\|(x_j)_{j=1}^1\|_{X(E)},\|(x_j)_{j=1}^2\|_{X(E)}, \ldots, \|(x_j)_{j=1}^k\|_{X(E)} \}= \|(x_j)_{j=1}^k\|_{X^{\rm fd}(E)} \label{d4eb}
\end{equation}
because $X$ is finitely shrinking. It follows that
	$$ (x_j)_{j=1}^\infty\in X^{\rm fd}(E) \Leftrightarrow \sup_k \|(x_j)_{j=1}^k\|_{X(E)}<\infty \Leftrightarrow \sup_k \|(x_j)_{j=1}^k\|_{X^{\rm fd}(E)}<+\infty, $$
and, in this case,
$$\|(x_j)_{j=1}^\infty\|_{X^{\rm fd}(E)}=\sup_k \|(x_j)_{j=1}^k\|_{X(E)}=\sup_k \|(x_j)_{j=1}^k\|_{X^{\rm fd}(E)},$$
proving that $X^{\rm fd}$ is finitely determined.

For $(x_j)_{j=1}^\infty\in X^{\rm fd}(E)$ and  $k\in \mathbb{N}$, using again that $X$ is finitely shrinking we get % . Basta olhar para a seguinte desigualdade
	$$ \|(x_1,\ldots,x_{k-1},x_{k+1},\ldots,x_n,0,0,\ldots)\|_{X(E)}\leq \|(x_j)_{j=1}^n\|_{X(E)}$$
for every $n > k$. Combining this with (\ref{d4eb}) it follows immediately that $X^{\rm fd}$ is finitely shrinking.\\
(c) Let $(x_j)_{j=1}^\infty \in E^\mathbb{N}$ and $k \in \mathbb{N}$ be so that $x_k = 0$. Since $X$ is finitely zero invariant,
\begin{equation}\|(x_j)_{j=1}^n \|_{X(E)} = \|(x_1, \ldots, x_{k-1}, x_{k+1}, \ldots, x_n, 0,0,\ldots)\|_{X(E)} \mbox{ for every } n > k. \label{fs4m}\end{equation}
Define $(y_j)_{j=1}^\infty$ by $y_j = x_j$ if $j < k$ and $y_j = x_{j+1}$ if $j \geq k$, that is, $(y_j)_{j=1}^\infty = (x_j)_{j \neq k}$. Combining (\ref{fs4m}) with
$$(x_1, \ldots, x_{k-1},0,0, \ldots) = (x_1, \ldots, x_{k-1}, x_k ,0 ,0, \ldots), $$  it is not difficult to see that
$$\sup_n \|(y_j)_{j=1}^n\|_{X(E)} =  \sup_n \|(x_j)_{j=1}^n\|_{X(E)}.$$
We conclude that $(x_j)_{j \neq k} = (y_j)_{j=1}^\infty \in X^{\rm fd}(E)$ if and only if $(x_j)_{j=1}^\infty \in X^{\rm fd}(E)$, and, in this case, $\|(x_j)_{j \neq k}\|_{X^{\rm fd}(E)} = \|(y_j)_{j=1}^\infty\|_{X^{\rm fd}(E)} \in X^{\rm fd}(E)$.  \\
(d) Proposition \ref{propxu} gives that $\overline{c_{00}}^X = X^u$ is a closed subclass of $X$. Let us check that $X^u$ is a closed subclass of $X^{\rm fd}$. Given $(x_j)_{j=1}^\infty \in X^u(E)$ and $\varepsilon>0$, let $k_0\in\mathbb{N}$ be such that $\|(x_k,x_{k+1},\ldots)\|_{X(E)}<\varepsilon$ for every $k\geq k_0$. Using that $X$ finitely shrinking and finitely zero invariant, we have
\begin{align*}
	\sup_{k} \|(x_j)_{j=1}^k\|_{X(E)}&=\sup_{k_0<k} \|(x_j)_{j=1}^k\|_{X(E)}=\sup_{k_0<k} \|(x_j)_{j=1}^\infty -  (\underbrace{0,\ldots,0}_{k-vezes},x_{k+1},x_{k+2},\ldots)   \|_{X(E)}\\
	&\leq\sup_{k_0<k} \left(\|(x_j)_{j=1}^\infty \|_{X(E)}+ \|(\underbrace{0,\ldots,0}_{k-vezes},x_{k+1},x_{k+2},\ldots)   \|_{X(E)}\right)\\
	&= \|(x_j)_{j=1}^\infty \|_{X(E)}+ \sup_{k_0<k} \|(\underbrace{0,\ldots,0}_{k-vezes},x_{k+1},x_{k+2},\ldots)   \|_{X(E)}\\
	&= \|(x_j)_{j=1}^\infty \|_{X(E)}+ \sup_{k_0<k} \|(x_{k+1},x_{k+2},\ldots)   \|_{X(E)}\leq  \|(x_j)_{j=1}^\infty \|_{X(E)}+ \varepsilon,
\end{align*}
which gives $(x_j)_{j=1}^\infty \in X^{\rm fd}(E)$ and $\|(x_j)_{j=1}^\infty\|_{ X^{\rm fd}(E)} \leq \|(x_j)_{j=1}^\infty\|_{X(E)} = \|(x_j)_{j=1}^\infty\|_{X^u(E)}$. Moreover, for every $k \geq k_0$,
\begin{align*}
	\|(x_j)_{j=1}^\infty \|_{X(E)}&= \|(x_j)_{j=1}^k + (\underbrace{0,\ldots,0}_{k-vezes},x_{k+1},x_{k+2},\ldots)  \|_{X(E)}\\
	&\leq \|(x_j)_{j=1}^k\|_{X(E)} + \|(\underbrace{0,\ldots,0}_{k-vezes},x_{k+1},x_{k+2},\ldots)  \|_{X(E)}\\
	&= \|(x_j)_{j=1}^k\|_{X(E)} + \|(x_{k+1},x_{k+2},\ldots)  \|_{X(E)} < \|(x_j)_{j=1}^k\|_{X(E)} + \varepsilon,% \\
%	&\leq \sup_{k} \|(x_j)_{j=1}^k\|_{X(E)} + \varepsilon,
\end{align*}
%para todo $\varepsilon >0$. Portanto $(x_j)_{j=1}^\infty \in X^{fd}(E)$ e
from which it follows that
$$\|(x_j)_{j=1}^\infty\|_{X^u(E)} = \|(x_j)_{j=1}^\infty\|_{X(E)}\leq \sup_k \|(x_j)_{j=1}^k\|_{X(E)} = \|(x_j)_{j=1}^\infty\|_{X^{\rm fd}(E)}$$ and proves that $X^u$ is a closed subclass of $X^{\rm fd}$.

Since $X$ (assumption) and $X^{\rm fd}$ (items (b) and (c)) are finitely shrinking and finitely zero invariant, Proposition \ref{propxu}(b) gives $X^u \stackrel{1}{=} \overline{c_{00}}^X$ and $(X^{\rm fd})^u \stackrel{1}{=} \overline{c_{00}}^{X^{\rm fd}}$. We have just proved that $X^u \stackrel{1}{=} \overline{c_{00}}^X$ is a closed subclass of $X^{\rm fd}$, in particular $X^u \stackrel{1}{\hookrightarrow} X^{\rm fd}$ with equal norms on $X^u$. Calling on Proposition \ref{propxu}(a) and (d) we get
$$ \overline{c_{00}}^X\stackrel{1}{=}X^u\stackrel{1}{=}(X^u)^u \stackrel{1}{\hookrightarrow} (X^{\rm fd})^u\stackrel{1}{=}\overline{c_{00}}^{X^{\rm fd}}$$
with equal norms in the inclusion. It is enough to prove that $\overline{c_{00}}^{X^{\rm fd}} \subseteq \overline{c_{00}}^X$. Given $(x_j)_{j=1}^\infty\in \overline{c_{00}}^{X^{\rm fd}}(E)$, using that $X^{\rm fd}$ is finitely zero invariant and $(X^{\rm fd})^u(E)=\overline{c_{00}}^{X^{\rm fd}}(E)$,
	\begin{align*}
	\|(x_j)_{j=1}^\infty- (x_j)_{j=1}^n\|_{X^{\rm fd}(E)}%&=\|(0,\ldots,0,x_{n+1},x_{n+2},\ldots )\|_{X^{\rm fd}(E)}
\stackrel{}{=}\|(x_{n+1},x_{n+2},\ldots )\|_{X^{\rm fd}(E)} \stackrel{n \to \infty}{\longrightarrow} 0,
	\end{align*}
proving that $(x_j)_{j=1}^n \stackrel{n \to \infty}{\longrightarrow} (x_j)_{j=1}^\infty$ in $X^{\rm fd}(E)$. By (\ref{d4eb}) we know that the norms $\|\cdot\|_{X^{\rm fd}}$ and $\|\cdot\|_X$ coincide on $c_{00}(E)$, so $((x_j)_{j=1}^n )_{n=1}^\infty$ is a Cauchy sequence in $X(E)$, hence convergent, say $(x_j)_{j=1}^n \stackrel{n \to \infty}{\longrightarrow} (y_j)_{j=1}^\infty$ in $X(E)$. But $\overline{c_{00}}^X(E)$ is closed in $X(E)$, so % segue que
%	$$  (y_j)_{j=1}^\infty \in \overline{c_{00}}^X(E),$$
%	ou seja,
$(x_j)_{j=1}^n \stackrel{n \to \infty}{\longrightarrow} (y_j)_{j=1}^\infty$ in $\overline{c_{00}}^X(E)$. This convergence occurs in $X^{\rm fd}(E)$ because $\overline{c_{00}}^X$ is a closed subclass of $X^{\rm fd}$. The uniqueness of the limit gives $(x_j)_{j=1}^\infty = (y_j)_{j=1}^\infty \in \overline{c_{00}}^X(E). $
	%quando $n\to\infty$ pois $(X^{fd})^u(E)=\overline{c_{00}}^{X^{fd}}(E)$.
%Proposition \ref{propxu} gives that $\overline{c_{00}}^X = X^u$ is a closed subclass of $X$. By (b) and, again, Proposition \ref{propxu}, we get that $(X^{\rm fd})^u$ is a closed subclass of $X^{\rm fd}$. It is enough to show that \textcolor{red}{$X^u \stackrel{1}{=} (X^{\rm fd})^u$}.
%\textcolor{blue}{Ariel: aqui eu preciso da sua ajuda. Por favor, escreva aqui a demonstração desse item. No arquivo da tese, isso está na Proposição 3.29(ii) e (iii) e na Proposição 3.31(vi). Eu sei que vai ter que colar argumentos de várias demonstrações do arquivo da tese. Para mim está difícil montar esse quebra-cabeça, por favor faça isso. Veja que vc pode usar a Proposition \ref{propxu}(b) deste arquivo e o item (b) desta proposição, provado acima. Pode escrever em português mesmo, depois eu passo para o inglês. Veja que não estou usando o conceito de finitamente zero relativo. Se precisar disso, use a propriedade sem dar o nome. Vc pode dizer que determinadas conclusões são óbvias (ou fáceis), não precisa demonstrar cada passo. Pode dizer coisas do tipo: ... Combinando isso que acabamos de provar com o fato de $X$ ser finitamente contrátil e finitamente zero invariante, segue facilmente que ...}
\end{proof}

\begin{theorem}	Let $X$ and $Y$ be finitely shrinking  and finitely zero invariant sequence classes. For every $n \in \mathbb{N}$, the ideal % $X$ for finitamente zero relativo e $Y$ finitamente zero invariante então, para todo $n\in\mathbb{N}$,
$\Pi_{X_1,\ldots,X_n;Y}$ is symmetric whenever $X_i\in \{X^u,X, X^{\rm fd} \}$ for every $i=1,\ldots,n$, and $X_k = X^u $ for some $k\in\{1,\ldots,n\}$. Moreover, in this case $\Pi_{X_1,\ldots,X_n;Y} = \Pi_{X_1,\ldots,X_n;Y^u}$.
\end{theorem}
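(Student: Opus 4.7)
My plan is to invoke Proposition \ref{cap3T01} with the auxiliary choices $Z := X^{\rm fd}$ and $W := Y^{\rm fd}$, which will yield the identification $\Pi_{X_1,\ldots,X_n;Y} = \Pi_{^n(X^{\rm fd});Y^{\rm fd}}$, and then appeal to \cite[Proposition 2.4]{botelho+wood} to conclude that the right-hand side is symmetric. The moreover statement will follow by rerunning exactly the same scheme with $Y$ replaced by $Y^u$.

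The hypotheses of Proposition \ref{cap3T01} to be verified are straightforward given Propositions \ref{propxu} and \ref{propxfd}. By Proposition \ref{propxfd}(b), both $X^{\rm fd}$ and $Y^{\rm fd}$ are finitely shrinking and finitely determined. For the inclusions $X_i \subseteq X^{\rm fd}$, any $(x_j)_{j=1}^\infty \in X(E)$ satisfies $\sup_k \|(x_j)_{j=1}^k\|_X < \infty$ by the finite shrinking of $X$, so $X \subseteq X^{\rm fd}$, and a fortiori $X^u \subseteq X^{\rm fd}$. On $c_{00}(E)$, all three of $X^u$, $X$, and $X^{\rm fd}$ carry the same norm $\|\cdot\|_X$: for $X^u$ this follows from Proposition \ref{propxu}(c), for $X^{\rm fd}$ from equation (\ref{d4eb}). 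Hence $X_i \stackrel{\rm fin}{\leq} X^{\rm fd}$ for every choice of $X_i \in \{X^u, X, X^{\rm fd}\}$. For the distinguished index $k$ with $X_k = X^u$, Proposition \ref{propxfd}(d) gives $X^u \stackrel{1}{=} (X^{\rm fd})^u$, so $X_k \subseteq Z^u$. Finally, equation (\ref{d4eb}) also gives $W = Y^{\rm fd} \stackrel{\rm fin}{=} Y$, while $W^u = (Y^{\rm fd})^u \stackrel{1}{=} Y^u \subseteq Y$ by Proposition \ref{propxfd}(d) combined with Proposition \ref{propxu}(c).

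For the moreover part, I rerun the same argument with $Y$ replaced by $Y^u$, keeping $Z = X^{\rm fd}$ and $W = Y^{\rm fd}$. Only two new verifications are needed: $W \stackrel{\rm fin}{\leq} Y^u$, which holds because $Y^u$ is a closed subclass of $Y$, so the norms of $Y$, $Y^u$ and $Y^{\rm fd}$ all agree on $c_{00}(E)$; and $W^u \subseteq Y^u$, which is just the identity $(Y^{\rm fd})^u \stackrel{1}{=} Y^u$ already used. Proposition \ref{cap3T01} then delivers $\Pi_{X_1,\ldots,X_n;Y^u} = \Pi_{^n(X^{\rm fd});Y^{\rm fd}} = \Pi_{X_1,\ldots,X_n;Y}$.

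The key insight behind the choice of $Z$ and $W$ is that the ``fd'' enlargement produces finitely determined classes without altering norms on $c_{00}$, while the identity $(X^{\rm fd})^u \stackrel{1}{=} X^u$ from Proposition \ref{propxfd}(d) means that the ``$u$'' procedure sees no difference between $X$ and its enlargement. These two facts together are exactly what Proposition \ref{cap3T01} needs to collapse the ideal onto the diagonal case, where symmetry is the known result of \cite{botelho+wood}. No serious obstacle is expected beyond the careful tracking of which norms and inclusions coincide where.
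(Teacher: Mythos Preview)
Your approach is essentially the same as the paper's: apply Proposition \ref{cap3T01} with $Z = X^{\rm fd}$ and $W = Y^{\rm fd}$, then invoke \cite[Proposition 2.4]{botelho+wood} for symmetry of the diagonal ideal $\Pi_{^nX^{\rm fd};Y^{\rm fd}}$. Your organization is actually slightly cleaner than the paper's --- you apply Proposition \ref{cap3T01} twice (once with target $Y$, once with target $Y^u$), whereas the paper applies it once with target $Y^u$ and then reproves half of it by hand to handle target $Y$.

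There is one imprecision worth flagging. You assert that ``any $(x_j)_{j=1}^\infty \in X(E)$ satisfies $\sup_k \|(x_j)_{j=1}^k\|_X < \infty$ by the finite shrinking of $X$''. Finite shrinking lets you remove coordinates, not zero them out; it does not by itself bound the norms of the truncations $(x_1,\ldots,x_k,0,0,\ldots)$. The paper uses \emph{both} hypotheses here: write $(x_j)_{j=1}^m = (x_j)_{j=1}^\infty - (0,\ldots,0,x_{m+1},x_{m+2},\ldots)$, then use finite zero invariance to identify the norm of the tail-with-zeros with that of $(x_{m+1},x_{m+2},\ldots)$, and finite shrinking to bound the latter by $\|(x_j)_{j=1}^\infty\|_X$, giving $\|(x_j)_{j=1}^m\|_X \leq 2\|(x_j)_{j=1}^\infty\|_X$. (Indeed, the paper's closing Remark explicitly notes that $X \subseteq X^{\rm fd}$ is not known for arbitrary sequence classes.) Since finite zero invariance is among your hypotheses, this is a one-line fix, not a structural problem.
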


\begin{proof} Let $X_1, \ldots, X_n$ be sequence classes as in the statement of the theorem. Given a Banach space $E$ and $(x_j)_{j=1}^\infty \in X(E)$, using that $X$ is finitely shrinking and finitely zero invariant we get, for any $m \in \mathbb{N}$, %como $X$ é finitamente zero relativo,
	\begin{align*}
	\|(x_j)_{j=1}^m\|_{X(E)}&=\|(x_j)_{j=1}^\infty-(\underbrace{0,\ldots,0}_{m \,{\rm times}},x_{m+1},x_{m+2},\ldots) \|_{X(E)}\\
	&\leq \|(x_j)_{j=1}^\infty\|_{X(E)}+\|(\underbrace{0,\ldots,0}_{m\, {\rm times}},x_{m+1},x_{m+2},\ldots) \|_{X(E)}\\
	&\leq \|(x_j)_{j=1}^\infty\|_{X(E)}+\|(x_j)_{j=1}^\infty\|_{X(E)}= 2 \|(x_j)_{j=1}^\infty\|_{X(E)}.
	\end{align*}
It follows that $(x_j)_{j=1}^\infty \in X^{\rm fd}(E)$, which proves that $X \subseteq X^{\rm fd}$. From Proposition \ref{propxfd}(b) we know that $X^{\rm fd}$ and $Y^{\rm fd}$ are finitely determined sequence classes, $\overline{c_{00}}^X = X^u$ is a closed subclass of $X^{\rm fd}$ and $\overline{c_{00}}^Y=Y^u$ is a closed subclass of $Y^{\rm fd}$. In particular, $X\stackrel{\rm fin}{=} X^{\rm fd}$ and $Y^u\stackrel{\rm fin}{=} Y^{\rm fd}$. We wish to apply Proposition \ref{cap3T01} to conclude that
\begin{equation}\label{pl3b}\Pi_{X_1,\ldots,X_n;Y^u}=\Pi_{^nX^{\rm fd};Y^{\rm fd}}. \end{equation}
It is enough to make a checklist of the assumptions of Proposition \ref{cap3T01} in this case: $X^{\rm fd}$ and $Y^{\rm fd}$ are finitely shrinking and finitely determined (Proposition \ref{propxfd}(b)); $Y^u\stackrel{\rm fin}{=} Y^{\rm fd}$ (see above); $X^u \subseteq X$ (obvious) and $X \subseteq X^{\rm fd}$ (see above); $X^u \stackrel{\rm fin}{=} X$ (obvious) and $X \stackrel{\rm fin}{=} X^{\rm fd}$ (see above); $X^u = (X^{\rm fd})^u$ (Proposition \ref{propxfd}(d)), so $X_k = (X^{\rm fd})^u$ for some $k$; $(Y^{\rm fd})^u = Y^u $ (Proposition \ref{propxfd}(d)). Hence, (\ref{pl3b}) holds. Given $A\in \Pi_{X_1,\ldots,X_n;Y}(E_1,\ldots ,E_n;F) $, using again that $Y^u\stackrel{\rm fin}{=} Y^{\rm fd}$ and $X_i \stackrel{fin}{=} X^{\rm fd}$, we have
	\begin{align*}
	\|(A(x_j^1,\ldots ,x_j^n))_{j=1}^{k}&\|_{Y^{\rm fd}(F)}=\|(A(x_j^1,\ldots ,x_j^n))_{j=1}^{k}\|_{Y^u(F)} = \|(A(x_j^1,\ldots ,x_j^n))_{j=1}^{k}\|_{Y(F)}\\
	&\leq \|A\|_{X_1,\ldots,X_n;Y}\cdot \prod_{i=1}^n \|(x_j^i)_{j=1}^k\|_{X_i(E_i)}= \|A\|_{X_1,\ldots,X_n;Y}\cdot \prod_{i=1}^n \|(x_j^i)_{j=1}^k\|_{X^{\rm fd}(E_i)}
	\end{align*}
for all $k\in\mathbb{N}$ and $x_j^i\in E_i$, $j=1,\ldots, k$, $i=1,\ldots, n$. As $X^{\rm fd}$ and $Y^{\rm fd}$ are finitely determined, therefore it follows that $A\in \Pi_{^nX^{\rm fd};Y^{\rm fd}}(E_1,\ldots ,E_n;F)  $, therefore  $\Pi_{X_1,\ldots,X_n;Y}\subseteq \mathcal{L}_{^nX^{\rm fd};Y^{\rm fd}}$. Combining this inclusion with the obvious inclusion $Y^u \subseteq Y$ and (\ref{pl3b}) we have
	$$ \Pi_{X_1,\ldots,X_n;Y^u}\subseteq \Pi_{X_1,\ldots,X_n;Y}\subseteq \Pi_{^nX^{\rm fd};Y^{\rm fd}}=\Pi_{X_1,\ldots,X_n;Y^u}.$$
%from which the result follows.%	portanto
Therefore, $\Pi_{X_1,\ldots,X_n;Y} %=\mathcal{L}_{X_1,\ldots,X_n;Y}
= \Pi_{^nX^{\rm fd};Y^{\rm fd}}$ is a symmetric ideal.
\end{proof}

\begin{example}\rm Let $1 \leq p < \infty$. Since $\ell_p^w$ and $\ell_p^{\rm mid}$ are finitely determined and finitely zero invariant, $(\ell_p^w)^u = \ell_p^u \neq \ell_p^w$ and $\ell_p^{\rm mid} \neq (\ell_p^{\rm mid})^u$,   $$\Pi_{\ell_p^{\theta_1},\ldots, \ell_p^{\theta_n};Y } \mbox{ and~} \Pi_{X_1,\ldots, X_n;Y } $$
 are non trivial sequential symmetric ideals for every $n \geq 2$, all $\theta_1, \ldots, \theta_n \in \{u,w\}$ with $\theta_i = u$ and $\theta_j = w$ for some $i$ and $j$, all $X_1, \ldots, X_n \in \{\ell_p^{\rm mid}, (\ell_p^{\rm mid})^u\}$ with $X_i = \ell_p^{\rm mid}$ and $X_j = (\ell_p^{\rm mid})^u$ for some $i$ and $j$, and every finitely shrinking and finitely zero invariant sequence class $Y \neq \ell_\infty(\cdot)$.

 In particular, $\Pi_{\ell_2^w, \ell_2^u; {\rm Rad}}$ is a symmetric ideal of bilinear operators, information which cannot be recovered using the previous section
 \end{example}

\begin{example}\rm  Since RAD is finitely shrinking and finitely zero invariant, for every $n \geq 2$, the ideal $\Pi_{X_1, \ldots, X_n;Y}$ is symmetric whenever $X_i \in \{ {\rm RAD}, {\rm Rad}\}$ with $X_i = {\rm RAD}$ and $X_j = {\rm Rad}$ for some $i$ and $j$, and $Y \neq \ell_\infty(\cdot)$ is a finitely shrinking and finitely zero invariant sequence class.
\end{example}

\begin{remark}\rm The reader might be wondering if $X \subseteq X^{\rm fd}$ for every sequence class $X$. All we know is that $X$ is a closed subclass of $X^{\rm fd}$  if $\|(x_j)_{j=1}^\infty\|_{X(E)} = \sup\limits_k  \|(x_j)_{j=1}^k\|_{X(E)}$ for every $(x_j)_{j=1}^\infty \in X(E)$ and any Banach space $E$. Note that this condition is weaker than being finitely determined.
\end{remark}

\section{Further applications}

In this section we show that the procedures $X \mapsto X^u$ and $X \mapsto X^{\rm fd}$ and the results proved in the previous sections can be useful beyond the scope of symmetric ideals of multilinear operators.

As mentioned in the Introduction, the class that originated this line of research is the ideal $\Pi_p$ of absoluely $p$-summing linear operators, defined as $\Pi_p = \Pi_{\ell_p^w;\ell_p(\cdot)}$. It is well known that it is enough to check the transformation of sequences in $\ell_p^u$, that is, $\Pi_p = \Pi_{\ell_p^w;\ell_p(\cdot)} = \Pi_{\ell_p^u;\ell_p(\cdot)} $. Let us see that this coincidence is a particular case of much more general ones.

\begin{proposition} Let $X$ be a finitely determined and finitely shrinking sequence class.\\
{\rm (a)} $\Pi_{X;Y} \stackrel{1}{=} \Pi_{X^u;Y}$ for every finitely determined sequence class $Y$.\\
{\rm (b)} $\Pi_{X;Y} \stackrel{1}{=} \Pi_{X^u;Y^u}$ for every finitely determined and finitely shrinking sequence class $Y$.\\
{\rm (c)} $\Pi_{X^u;Y^u} = \Pi_{X;Y^{\rm fd}} \stackrel{1}{=} \Pi_{X^u; Y^{\rm fd}}$ for every finitely shrinking and finitely zero invariant sequence class $Y$.
\end{proposition}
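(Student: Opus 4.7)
The plan is to prove the three parts sequentially: (a) from scratch, then (b) by adding a tail argument to (a), then (c) by applying (a) and (b) with $Y$ replaced by $Y^{\rm fd}$.

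For (a), one inclusion $\Pi_{X;Y} \subseteq \Pi_{X^u;Y}$ with norm-one embedding is immediate from Proposition \ref{propxu}(c), which gives $X^u \stackrel{1}{\hookrightarrow} X$. For the converse, I would take $u \in \Pi_{X^u;Y}$ and $(x_j)_{j=1}^\infty \in X(E)$ and test summability on the finite truncations $(x_1,\ldots,x_k,0,0,\ldots)$. A preliminary observation is that $c_{00}(E) \subseteq X^u(E)$, since an eventually null sequence has eventually zero tails, and since $X^u$ is a closed subclass of $X$ the two norms coincide on $c_{00}(E)$. Applying the $(X^u;Y)$-summability estimate to each truncation and then taking the supremum over $k$, the assumption that both $X$ and $Y$ are finitely determined collapses the suprema to the full-sequence norms, yielding $\|(u(x_j))_{j=1}^\infty\|_{Y(F)} \leq \|u\|_{X^u;Y}\cdot\|(x_j)_{j=1}^\infty\|_{X(E)}$.

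For (b), the idea is to upgrade an $(X;Y)$-summing operator to an $(X^u;Y^u)$-summing one by exploiting that the tails of an $X^u$-sequence vanish in $X$-norm. Given $u \in \Pi_{X;Y}$ and $(x_j)\in X^u(E)$, iterating the finite shrinking of $X$ places every tail $(x_n,x_{n+1},\ldots)$ in $X(E)$, and the definition of $X^u$ forces $\|(x_n,x_{n+1},\ldots)\|_{X(E)} \to 0$. The $(X;Y)$-summability estimate then gives $\|(u(x_n),u(x_{n+1}),\ldots)\|_{Y(F)} \leq \|u\|_{X;Y}\cdot\|(x_n,x_{n+1},\ldots)\|_{X(E)} \to 0$, so $(u(x_j))\in Y^u(F)$ with the expected norm control. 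The reverse inclusion is free: since $Y^u$ is a closed subclass of $Y$, $\Pi_{X^u;Y^u} \stackrel{1}{\hookrightarrow}\Pi_{X^u;Y}$, and by (a) the latter is isometric to $\Pi_{X;Y}$.

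For (c), the key is to reduce to (a) and (b) via the passage from $Y$ to $Y^{\rm fd}$. Proposition \ref{propxfd}(b) tells us that $Y^{\rm fd}$ is finitely determined and finitely shrinking (because $Y$ is finitely shrinking), and Proposition \ref{propxfd}(d) gives $(Y^{\rm fd})^u \stackrel{1}{=} Y^u$ (using that $Y$ is finitely shrinking and finitely zero invariant). Applying (a) to the pair $(X, Y^{\rm fd})$ yields $\Pi_{X;Y^{\rm fd}} \stackrel{1}{=} \Pi_{X^u;Y^{\rm fd}}$, and applying (b) to the same pair yields $\Pi_{X;Y^{\rm fd}} \stackrel{1}{=} \Pi_{X^u;(Y^{\rm fd})^u} = \Pi_{X^u;Y^u}$; concatenating these gives the claimed chain of equalities. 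I do not foresee a serious obstacle: the argument is an orchestration of the closed-subclass identity of norms, the sup formula from finite determination, and the vanishing-tail definition of $X^u$. The only delicate points are the one-line verification $c_{00}(E) \subseteq X^u(E)$ in (a) and making sure the norm identities in (b) turn the set-theoretic coincidence into an isometric one.
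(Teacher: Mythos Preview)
Your proposal is correct and follows essentially the same line as the paper. Part (a) is the truncation argument using that $X^u$ is a closed subclass of $X$ together with finite determination of $X$ and $Y$, which the paper obtains by cross-reference to the proof of Proposition~\ref{cap3T01}; part (b) is the tail-vanishing argument combined with (a), exactly as in the paper; for part (c) you reduce to (a) and (b) applied to $Y^{\rm fd}$ via Proposition~\ref{propxfd}(b)(d), whereas the paper invokes (a) and Proposition~\ref{cap3T01} directly---but since the second half of Proposition~\ref{cap3T01} is precisely the tail argument you packaged into (b), the two routes coincide, and your version even yields the first equality in (c) isometrically.
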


\begin{proof} (a) Since $X^u$ is a closed subclass of $X$, the desired equality was obtained in the proof of Proposition \ref{cap3T01}.\\
(b) Let $T\in \Pi_{X^u;Y}(E;F)$ and $ (x_j)_{j=1}^\infty \in X^u(E) $ be given. For every $n\in\mathbb{N}$, we have $(x_j)_{j=n}^\infty\in X(E)$ and $(T(x_j))_{j=n}^\infty\in Y(F)$ because $X$ and $Y$ are finitely shrinking. Thus,
	$$ \|(T(x_j))_{j=n}^\infty\|_{Y(F)} \leq \|T\|_{X^u;Y} \|(x_j)_{j=n}^\infty\|_{X(E)} \stackrel{n \to \infty}{\longrightarrow} 0,$$
showing that $T \in  \Pi_{X^u;Y^u}(E;F)$. It follows that $\Pi_{X^u;Y} \stackrel{1}{\hookrightarrow} \Pi_{X^u;Y^u}$. %Now, given $T\in \Pi_{X^u;Y^u}(E;F)$ and $(x_j)_{j=1}^\infty \in X^u(E) $, we have  temos
%	$$ \|(T(x_j))_{j=1}^\infty\|_{Y(F)} \leq \|T\|_{X^u;Y^u} \|(x_j)_{j=1}^\infty\|_{X(E)},$$
%	portanto
The inclusion $\Pi_{X^u;Y^u} \stackrel{1}{\hookrightarrow} \Pi_{X^u;Y}$ is straightforward, so $\Pi_{X^u;Y^u} \stackrel{1}{=} \Pi_{X^u;Y}$. The result follows from (a).\\ % Para concluir, basta aplicar o item (i).
(c) By Proposition \ref{propxfd}(b), $Y^{\rm fd}$ is finitely determined. From (a) it follows that $\Pi_{X; Y^{\rm fd}} \stackrel{1}{=} \Pi_{X^u; Y^{\rm fd}}$. We know that $Y^{\rm fd}$ is finitely shrinking (Proposition \ref{propxfd}(b)) and $(Y^{\rm fd})^u = Y^u$ (Proposition \ref{propxfd}(d)), so $\Pi_{X;Y^{\rm fd}} = \Pi_{X^u;Y^u}$ by Proposition \ref{cap3T01}.%  Para concluir o item (iii) basta aplicar o Teorema 3.34.
\end{proof}

\begin{example}\rm Let $1 \leq p < \infty$.  Taking $X = \ell_p^w$ and $Y = \ell_p(\cdot)$ we recover the known coincidence $ \Pi_{\ell_p^w;\ell_p(\cdot)} = \Pi_{\ell_p^u;\ell_p(\cdot)} $. And taking $X = \ell_p^w$ and $Y = {\rm RAD}$ we get \begin{equation}\Pi_{\ell_p^u; {\rm Rad}} = \Pi_{\ell_p^w; {\rm RAD}}= \Pi_{\ell_p^u; {\rm RAD}}. \label{t6bq}
\end{equation}
Since ${\rm Rad}(\mathbb{K}) = {\rm RAD}(\mathbb{K}) = \ell_2$, the class of operators in (\ref{t6bq}) is of interest (in the sense that it is an operator ideal) only for $1 \leq p \leq 2$. The information in (\ref{t6bq}) generalizes the equality $\Pi_{\ell_2^u; {\rm Rad}} = \Pi_{\ell_2^w; {\rm RAD}}$ obtained in \cite{botelho+campos} for the ideal of almost summing linear operators (see \cite{diestel+jarchow+tonge}).
\end{example}

 For $1 \leq p < \infty$, we are interested in sequence classes $X$ having $\ell_p$ as scalar component, that is, $X(\mathbb{K}) = \ell_p$. Contemplating the following chain of the most usual of such sequence classes
$$\ell_p\langle\cdot \rangle \subseteq \ell_p(\cdot) \subseteq \ell_p^u \cap \ell_p^{\rm mid} \subseteq \ell_p^u \cup \ell_p^{\rm mid} \subseteq \ell_p^w, $$
it is inevitable to wonder if every sequence class having $\ell_p$ as scalar component lies between $\ell_p \langle \cdot \rangle$ and $\ell_p^w$. On the one hand, it is easy to see that $\ell_p^w$ is the greatest sequence class having $\ell_p$ as scalar component. More precisely, if $X$ is a sequence class with $X(\mathbb{K}) = \ell_p$, then $X \stackrel{1}{\hookrightarrow} \ell_p^w$. This fact follows immediately from the linear stability of $X$ applied to linear functionals. On the other hand, we shall prove next that $\ell_p \langle \cdot \rangle$ is minimal amongst the sequence classes having $\ell_p$ as scalar component and enjoying some properties shared by all usual classes. We shall do that using the procedures $X \mapsto X^u$ and $X \mapsto X^{\rm fd}$ from the previous sections, as well as the procedure $X \mapsto X^{\rm dual}$ from \cite{botelho+campos+2}, which we describe next.

According to \cite{botelho+campos+2}, a sequence class $X$ is said to be:\\
$\bullet$ {\it Finitely injective} if $\|(x_j)_{j=1}^n\|_{X(E)} \leq \|(i(x_j))_{j=1}^n\|_{X(F)} $ for every metric injection $i\colon E\longrightarrow F$ between Banach spaces and all $n\in\mathbb{N}$, $x_1,\ldots,x_n \in E$.\\
$\bullet$ {\it Spherically complete} if $(\alpha_j x_j)_{j=1}^\infty\in X(E)$ and $\|(\alpha_j  x_j)_{j=1}^\infty\|_{X(E)}=\|( x_j)_{j=1}^\infty\|_{XE}$ whenever $( x_j)_{j=1}^\infty\in X(E)$ and $(\alpha_j)_{j=1}^\infty\in \mathbb{K}^{\mathbb{N}}$, with $|\alpha_j|=1$ for every $j\in\mathbb{N}$.

Given a sequence class $X$ and a Banach space $E$, define
\begin{equation} \label{po9n}X^{\rm dual}(E) := \left\{(x_j)_{j=1}^\infty \in E^{\mathbb{N}} : \sum_{j=1}^\infty x_j^*(x_j) \mbox{ converges for every }(x_j^*)_{j=1}^\infty \in X(E^*)\right\},\end{equation}
$$\|(x_j)_{j=1}^\infty\|_{X^{\rm dual}(E)} := \sup_{(x_j^*)_{j=1}^\infty \in B_{X(E^*)}}\sum_{j=1}^\infty |x_j^*(x_j)|. $$
If $X$ is spherically complete, then $X^{\rm dual}$ is a (linearly stable)  finitely determined and spherically complete sequence class \cite[Proposition 2.5]{botelho+campos+2}.

\begin{proposition} Let $1 \leq p < \infty$ and let $X$ be a finitely injective, finitely zero invariant, finitely shrinking  and spherically complete sequence class. If $X(\mathbb{K}) = \ell_p$ and $X \stackrel{1}{\hookrightarrow} \ell_p \langle \cdot \rangle$, then $X \stackrel{1}{=} \ell_p \langle \cdot \rangle$.
\end{proposition}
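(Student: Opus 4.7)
The hypothesis supplies one of the two isometric inclusions, so the work is to establish $\ell_p\langle\cdot\rangle \stackrel{1}{\hookrightarrow} X$. My plan is to compute $X^{\rm dual}$ and $(X^{\rm dual})^{\rm dual}$ via the duality operation $X\mapsto X^{\rm dual}$ recalled in the present section, and then to argue that $X$ coincides with its second dual.

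\textbf{Step 1: identifying $X^{\rm dual}$.} Since $X$ is spherically complete, \cite[Proposition 2.5]{botelho+campos+2} guarantees that $X^{\rm dual}$ is itself a linearly stable, finitely determined, spherically complete sequence class, and a direct computation from (\ref{po9n}) together with $X(\mathbb{K})=\ell_p$ gives $X^{\rm dual}(\mathbb{K})=\ell_p^*=\ell_{p'}$. By the maximality observation recalled immediately above the statement — any sequence class with scalar component $\ell_q$ embeds isometrically into $\ell_q^w$ via linear stability applied to functionals — we have $X^{\rm dual}\stackrel{1}{\hookrightarrow}\ell_{p'}^w$. For the opposite inclusion, fix $(y_j^*)\in \ell_{p'}^w(E^*)$ and $(x_j)\in X(E)$; the hypothesis $X\stackrel{1}{\hookrightarrow}\ell_p\langle\cdot\rangle$ together with the defining Cohen inequality yields
\[
\sum_j |y_j^*(x_j)| \;\le\; \|(y_j^*)\|_{\ell_{p'}^w(E^*)}\cdot \|(x_j)\|_{\ell_p\langle E\rangle} \;\le\; \|(y_j^*)\|_{\ell_{p'}^w(E^*)}\cdot \|(x_j)\|_{X(E)},
\]
so $\|(y_j^*)\|_{X^{\rm dual}(E^*)}\le \|(y_j^*)\|_{\ell_{p'}^w(E^*)}$, i.e.\ $\ell_{p'}^w\stackrel{1}{\hookrightarrow}X^{\rm dual}$. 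Combining, $X^{\rm dual}\stackrel{1}{=}\ell_{p'}^w$.

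\textbf{Step 2: closing via the second dual.} Applying the duality operation again gives $(X^{\rm dual})^{\rm dual}\stackrel{1}{=}(\ell_{p'}^w)^{\rm dual}\stackrel{1}{=}\ell_p\langle\cdot\rangle$. The natural pairing always produces $X\stackrel{1}{\hookrightarrow}(X^{\rm dual})^{\rm dual}$, which merely re-derives the hypothesis, so it remains to prove the reverse $(X^{\rm dual})^{\rm dual}\stackrel{1}{\hookrightarrow}X$, i.e., $\ell_p\langle\cdot\rangle\stackrel{1}{\hookrightarrow}X$. To do this I would take $(x_j)\in \ell_p\langle E\rangle$ of norm $\le 1$, reduce to the finite truncations $y^n=(x_1,\dots,x_n,0,0,\dots)\in c_{00}(E)\subseteq X(E)$, and establish the finite inequality $\|y^n\|_{X(E)}\le \|y^n\|_{\ell_p\langle E\rangle}$ using finite injectivity to embed $E$ metrically into $\ell_\infty(B_{E^*})$, spherical completeness to align $\sup\sum|\cdot|$ with $\sup|\sum\cdot|$, and the coincidence $\|(a_j x)\|_{X(E)}=\|(a_j)\|_{\ell_p}\|x\|$ on rank-one sequences — which follows from linear stability and $X(\mathbb{K})=\ell_p$ — to assemble $y^n$ inside $X(E)$ with the desired norm bound. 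Passage to the full sequence is then routine: $\ell_p\langle\cdot\rangle$ is finitely shrinking (so the tails $\|(x_{n+1},x_{n+2},\dots)\|_{\ell_p\langle E\rangle}$ vanish), which combined with finite zero invariance makes $(y^n)$ Cauchy in $X(E)$; by completeness and $X\stackrel{1}{\hookrightarrow}\ell_\infty(\cdot)$ the $X(E)$-limit coincides coordinate-wise with $(x_j)$, and the norm bound passes to the limit.

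\textbf{Main obstacle.} The technical heart is the finite-sequence inequality $\|y^n\|_{X(E)}\le \|y^n\|_{\ell_p\langle E\rangle}$. Cohen's norm is a genuinely dual object and does not decompose into a controllable sum of rank-one norms: a bare triangle-inequality reduction $(x_j)=\sum_k (a_j^k y^k)_j$ only produces the projective tensor norm $\inf\sum_k\|a^k\|_{\ell_p}\|y^k\|$, which is generically strictly larger than Cohen's. All four structural hypotheses on $X$ must therefore be used in concert — essentially a Hahn--Banach extension within the finite-dimensional subspace spanned by $\{x_1,\dots,x_n\}$, transferred to $X(E)$ via finite injectivity and spherical completeness — and producing this clean isometric match is the delicate point of the argument.
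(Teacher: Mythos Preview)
Your Step~1 does more than necessary --- the paper only uses the one-sided inclusion $(X^u)^{\rm dual}\stackrel{1}{\hookrightarrow}\ell_{p^*}^w$ coming from maximality --- and it has a small gap: membership in $X^{\rm dual}(E^*)$ is by definition tested against sequences in $X(E^{**})$, not $X(E)$, so pairing $(y_j^*)$ only with $(x_j)\in X(E)$ does not suffice as written (though this is repairable via the canonical embedding).

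The substantive problem is Step~2. You correctly reduce to the finite-sequence inequality $\|y^n\|_{X(E)}\le\|y^n\|_{\ell_p\langle E\rangle}$, and the Cauchy-extension argument afterwards is fine, but your ``Main obstacle'' paragraph only names ingredients (finite injectivity, spherical completeness, Hahn--Banach) without assembling them into an argument. The paper fills exactly this gap, working with $X^u$ rather than $X$. Since $X$ is finitely shrinking and finitely zero invariant, Proposition~\ref{propxfd} gives $X^u=\overline{c_{00}}^X=\overline{c_{00}}^{X^{\rm fd}}$ with $X^{\rm fd}$ finitely determined, so $X^u$ is finitely dominated in the sense of \cite{botelho+campos+2}; together with finite injectivity and spherical completeness this puts $X^u$ under \cite[Theorem~2.10]{botelho+campos+2}, yielding the isometric identification $[X^u(E)]^*\cong(X^u)^{\rm dual}(E^*)$ via the natural pairing. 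Hahn--Banach plus the ball inclusion $B_{(X^u)^{\rm dual}(E^*)}\subseteq B_{\ell_{p^*}^w(E^*)}$ then give $\|(x_j)\|_{X(E)}\le\|(x_j)\|_{\ell_p\langle E\rangle}$ for every $(x_j)\in X^u(E)$, which is your missing inequality and more. The chain $\ell_p\langle\cdot\rangle=\overline{c_{00}}^{\ell_p\langle\cdot\rangle}\stackrel{1}{\hookrightarrow}X^u\stackrel{1}{\hookrightarrow}X\stackrel{1}{\hookrightarrow}\ell_p\langle\cdot\rangle$ closes the proof. In short, the double-dual route stalls because $X\hookrightarrow(X^{\rm dual})^{\rm dual}$ has no automatic converse; what is actually needed is a representation of $\|\cdot\|_{X^u}$ itself as a dual norm, and that is precisely what the cited duality theorem --- where finite injectivity and the $X^u$/$X^{\rm fd}$ machinery of Sections~4--5 are genuinely cashed in --- provides.
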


\begin{proof} We already know that $X^u$ is a (linearly stable) sequence class and  $X^u=\overline{c_{00}}^X$ (Proposition \ref{propxu}). Using that $X$ is finitely shrinking and spherically complete, it follows easily that $X^u$ is spherically complete, so we can consider the sequence class $(X^u)^{\text{dual}}$. %Vejamos que a hipótese de $X$ ser finitamente contrátil garante que  $X^u$ também seja esfericamente completa. De fato, sejam $(x_j)_{j=1}^\infty \in X^u(E)$ e $(\lambda_j)_{j=1}^\infty \in \mathbb{K}^\mathbb{N}$, com $|\lambda_j|=1$ para todo $j\in\mathbb{N}$. Como $X$ é finitamente contrátil e esfericamente completa então;
	%$$(x_j)_{j=n}^\infty \in X(E),~(\lambda_j)_{j=n}^\infty \in \mathbb{K}^\mathbb{N} \Rightarrow  (\lambda_j x_j)_{j=n}^\infty \in X(E) ~\text{e}~ \|(\lambda_j x_j)_{j=n}^\infty\|_{X(E)}= \|( x_j)_{j=n}^\infty\|_{X(E)}$$
%	para todo $n\in\mathbb{N}$, portanto $(\lambda_j x_j)_{j=1}^\infty \in X^u(E)$. Além disso,
It is clear that $X^u(\mathbb{K})=\ell_p$. Denoting $(\ell_p)_* = \ell_{p^*}$ if $1 < p < \infty$ and $(\ell_1)_* = c_0$, from (\ref{po9n}) we get $(X^u)^{\text{dual}}(\mathbb{K}) = (\ell_{p})_*$.
	%\begin{align*}
%	(X^u)^{\text{dual}}(\mathbb{K})&=\left\{(\beta_j)_{j=1}^\infty \in \mathbb{K}^\mathbb{N}:\sum_{j=1}^{\infty} \alpha_j \beta_j ~\text{converge~para~todo}~(\alpha_j)_{j=1}^\infty \in X^u(\mathbb{K})=\ell_{p}\right\}\\
%	&=(\ell_{p})_*.
%	\end{align*}
Let $E$ be a Banach space. The linear stability of $(X^u)^{\text{dual}}$ gives that $(x^*(x_j))_{j=1}^\infty \in (X^u)^{\text{dual}}(\mathbb{K})= \ell_{p^*}$ for all $x^* \in E^*$ and $(x_j)_{j=1}^\infty \in (X^u)^{\text{dual}}(E)$. Moreover, % obtemos , pois $X$ linearmente estável implica $X^u$ linearmente estável que por sua vez implica $(X^u)^{\text{dual}}$ linearmente estável,
%portanto
$ (X^u)^{\text{dual}}(E) \stackrel{1}{\hookrightarrow} \ell_{p^*}^w(E)$; in particular, $B_{ (X^u)^{\text{dual}}(E)} \subseteq B_{\ell_{p^*}^w(E)} .$
	From Proposition \ref{propxfd}(b) and (d) we know that $X^{\rm fd}$ is finitely determined and that $X^u=\overline{c_{00}}^X$ is a closed subclass of $X^{\rm fd}$. And from the proof of Proposition \ref{propxfd}(b) (see (\ref{d4eb})) we have that % , mais ainda, como $X$ é finitamente contrátil as normas
$\|\cdot\|_{X(E)}= \|\cdot\|_{X^{\rm fd}(E)}$ on $c_{00}(E)$, hence  $X^u=\overline{c_{00}}^X=\overline{c_{00}}^{X^{\rm fd}}$. In the terminology of \cite{botelho+campos+2} this implies that $X^u$ is a finitely dominated sequence class. We have shown that $X^u$ fulfills all the assumptions of \cite[Theorem 2.10]{botelho+campos+2}, so the spaces $(X^u)^{\rm dual}(E^*)$ and $[X^u(E)]^*$ are isometrically isomorphic by means of the correspondence
$$ x^* = (x_j^*)_{j=1}^\infty \in (X^u)^{\rm dual}(E^*) \mapsto \varphi_{x^*} \in [X^u(E)]^*, \varphi_{x^*}((x_j)_{j=1}^\infty) = \sum_{j=1}^\infty x_j^*(x_j),  $$
for every $(x_j)_{j=1}^\infty \in X^u(E)$.
%Temos então, todas as condições para usar o Teorema \ref{capPT2}, isto é, $(X^u)^{\text{dual}}(E')\cong (X^u(E))'$. Daí, dado
Applying the Hahn-Banach Theorem, for any $(x_j)_{j=1}^\infty \in X^u(E)$ we have
	\begin{align*}
	\|(x_j)_{j=1}^\infty\|_{X(E)}&=\|(x_j)_{j=1}^\infty\|_{X^u(E)}=\sup_{\varphi\in B_{[X^u(E)]^*}} \left| \varphi((x_j)_{j=1}^\infty ) \right|\\
	&=\sup_{(x_j^*)_{j=1}^\infty \in B_{(X^u)^{\text{dual}}(E^*)}} \left| \sum_{j=1}^{\infty} x_j^*(x_j) \right|\leq\sup_{(x_j^*)_{j=1}^\infty \in B_{\ell_{p^*}^w(E)}} \left| \sum_{j=1}^{\infty} x_j^*(x_j) \right|= \|(x_j)_{j=1}^\infty\|_{\ell_p\langle E\rangle},
	\end{align*}
By assumption we have $X \stackrel{1}{\hookrightarrow} \ell_p\langle \cdot \rangle $, so the inequality above gives that $X^u$ is a closed subclass of $\ell_p\langle \cdot\rangle$. It follows that
	$$\overline{c_{00}}^{\ell_p\langle \cdot \rangle} (E) \stackrel{1}{\hookrightarrow} X^u(E) \stackrel{1}{\hookrightarrow} X(E) \stackrel{1}{\hookrightarrow} \ell_p\langle E \rangle \stackrel{1}{=} \overline{c_{00}}^{\ell_p\langle \cdot \rangle} (E),$$
from which we get $X \stackrel{1}{=} \ell_p \langle \cdot \rangle$.
\end{proof}

\bigskip

\noindent Geraldo Botelho~~~~~~~~~~~~~~~~~~~~~~~~~~~~~~~~~~~~~~Ariel S. Santiago\\
Faculdade de Matem\'atica~~~~~~~~~~~~~~~~~~~~~~~~~~Departamento de Matemática\\
Universidade Federal de Uberl\^andia~~~~~~~~~~~~~Universidade Federal de Minas Gerais\\
38.400-902 -- Uberl\^andia -- Brazil~~~~~~~~~~~~~~~~~31.270-901 -- Belo Horizonte -- Brazil\\
e-mail: botelho@ufu.br~~~~~~~~~~~~~~~~~~~~~~~~~\,~~~~~e-mail: arielsantossantiago@hotmail.com

%\medskip
%
%\noindent Ariel S. Santiago\\
%Departamento de Matemática\\
%Universidade Federal de Minas Gerais\\
%???? -- Belo Horizonte -- Brazil\\
%e-mail: ?????

%\bigskip
%\bigskip
%
%$F$ preserva multimorfismos de Riesz se as extensões de Aron-Berner de $F$-valued MR são MR
%
%\bigskip
%
%Teorema. Se $F_n$ preserva MR para todo $n$ então $(\oplus_n F_n)_p$ e $(\oplus_n F_n)_0$ preservam MR

\end{document}